\documentclass{amsart}
\usepackage{amssymb,amsrefs,mathrsfs}

\textheight215mm

\newcommand\Z{\mathbb{Z}}
\newcommand\N{\mathbb{N}}
\newcommand\R{\mathbb{R}}
\newcommand\Q{\mathbb{Q}}

\newtheorem{lemma}{Lemma}[section]

\newtheorem{proposition}[lemma]{Proposition}

\newtheorem{corollary}[lemma]{Corollary}
\newtheorem{example}[lemma]{Example}

\newtheorem{maintheorem}{Theorem}

\theoremstyle{definition}
\newtheorem{remark}[lemma]{Remark}
\newtheorem{definition}[lemma]{Definition}

\newcommand\grig{G_{012}}
\newcommand\wrwr{\operatorname{\wr\wr}}

\begin{document}
\title{Imbeddings into groups of intermediate growth}

\author{Laurent Bartholdi}
\address{L.B.: Mathematisches Institut, Georg-August Universit\"at, G\"ottingen, Germany}
\email{laurent.bartholdi@gmail.com}

\author{Anna Erschler}
\address{A.E.: C.N.R.S., D\'epartement de Math\'ematiques, Universit\'e Paris Sud, Orsay, France}
\email{anna.erschler@math.u-psud.fr}

\date{June 23, 2014}

\dedicatory{To Pierre de la Harpe, who introduced us to the beauty and diversity of the world of infinite groups, in gratitude}

\thanks{The work is supported by the ERC starting grant 257110
  ``RaWG'', the ANR ``DiscGroup: facettes des groupes discrets'', the
  Centre International de Math\'ematiques et Informatique, Toulouse,
  and the Institut Henri Poincar\'e, Paris}

\begin{abstract}
  Every countable group that does not contain a finitely generated
  subgroup of exponential growth imbeds in a finitely generated group
  of subexponential word growth.
\end{abstract}
\maketitle

\section{Introduction}

A classical result by Higman, Neumann and
Neumann~\cite{higman-n-n:embed} states that every countable group
imbeds in a finitely generated group. It was then shown that many
properties of the group can be inherited by the imbedding: in
particular, solvability (Neumann-Neumann~\cite{neumann-n:embed}),
torsion (Phillips~\cite{phillips:embedding}), residual finiteness
(Wilson~\cite{wilson:embeddingrf}), and amenability
(Olshansky-Osin~\cite{olshanskii-osin:qiembedding}).

Seen the other way round, these results show that there is little
restriction, apart from being countable, on the subgroups of a
finitely generated group.  \smallskip

A finitely generated group $G$ has \emph{polynomial growth} if there
is a polynomial function $p(n)$ bounding from above the number of
group elements that are products of at most $n$ generators, has
\emph{subexponential growth} if $p(n)$ may be chosen subexponential in
$n$, and has \emph{intermediate growth} if $G$ has subexponential but
not polynomial growth.

By a theorem of Gromov~\cite{gromov:nilpotent}, groups of polynomial
growth are virtually nilpotent, so all its subgroups are finitely
generated (see e.g.~\cite{mann:howgroupsgrow}*{Corollary~9.10}). On
the other hand, there are groups of intermediate growth such as the
``first Grigorchuk group''~\cite{grigorchuk:growth} with infinitely
generated subgroups. We are therefore led to ask which groups may
appear as subgroups of a group of subexponential growth.

\subsection{Main result}
Let us say that a group has \emph{locally subexponential growth} if
all of its finitely generated subgroups have subexponential growth.
Clearly, if $G$ has subexponential growth then all its subgroups have
locally subexponential growth. Our main result shows that this is the
only restriction:
\begin{maintheorem}\label{thm:imbed}
  Let $B$ be a countable group of locally subexponential growth. Then
  there exists a finitely generated group of subexponential growth in
  which $B$ imbeds as a subgroup.
\end{maintheorem}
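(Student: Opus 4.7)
The plan is to embed $B$ into a permutational wreath-type product $G=\bigl(\bigoplus_{x\in X}B\bigr)\rtimes H$, where $H$ is a fixed group of intermediate growth (a variant of the Grigorchuk group $\grig$) acting on a set $X$, and then to extract a finitely generated subgroup $G_0\le G$ that still contains $B$ and has subexponential growth. The notation $\wrwr$ from the preamble suggests that the authors use exactly such a ``marked'' permutational wreath product, designed so that a single extra generator will suffice to encode all of $B$.

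I would begin by writing $B=\bigcup_{n\ge 1}B_n$ as an increasing union of finitely generated subgroups, with symmetric generating sets $S_n$; by hypothesis each $B_n$ has subexponential word-growth function $\beta_n$. I then pick a sparse orbit $\{x_1,x_2,\dots\}\subset X$ under $H$ and a single ``marked'' element $\eta\in\bigoplus_x B$ whose value at $x_i$ enumerates the elements of $\bigcup_n S_n$. Conjugations $h\eta h^{-1}$ move these coordinate values around $X$, and well-chosen products of such conjugates produce, in a fixed base coordinate $x_0$, every element of $B$. Consequently $G_0:=\langle\eta,H\rangle$ is finitely generated and contains a copy of $B$.

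The growth estimate I would aim for has the shape
\[
|B_{G_0}(n)|\;\le\;|B_H(n)|\cdot\beta_{i(n)}(n)^{\sigma(n)},
\]
where $\sigma(n)$ denotes the maximal number of sites $x_i$ that an $H$-word of length $n$ can activate, and $i(n)$ the maximal index of such a site. This bound comes from decomposing any element of $G_0$ into its $H$-image and its finitely many nonzero $B$-coordinates, each of which lies in $B_{i(n)}$ and is a product of at most $n$ elements of $S_{i(n)}$. Since $|B_H(n)|$ is already subexponential, it is enough to arrange that $\sigma(n)\log\beta_{i(n)}(n)=o(n)$.

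The hard part is precisely this diagonal balancing. Each $\beta_k$ is individually subexponential, but there is no uniform rate of decay of $\log\beta_k(n)/n$ as $k\to\infty$; one must therefore design $H$ and its orbit $\{x_i\}$ so that the activity $\sigma(n)$ is \emph{strongly} sublinear --- which is where the self-similarity and contraction properties of $\grig$-like branch groups are crucial --- and then choose $i(n)\to\infty$ slowly enough that $\log\beta_{i(n)}(n)=o(n/\sigma(n))$ while still exhausting all of $B$. The $\wrwr$ construction is the vehicle that allows a single extra generator $\eta$ to encode the entire family $\{S_n\}$ at sites sparse enough to respect these constraints, and verifying all three ingredients simultaneously (finite generation, the embedding of $B$, and the subexponential growth estimate) is the main technical content of the theorem.
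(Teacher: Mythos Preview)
Your overall architecture matches the paper: a Grigorchuk-type group $H$ acting on a set $X$, one infinitely-supported function $\eta\colon X\to B$ placing the generators of $B$ at a sparse sequence of sites, and $G_0=\langle H,\eta\rangle$. But there is a genuine gap at the embedding step, and a smaller one at the growth step.

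\textbf{The embedding gap.} Your sentence ``well-chosen products of such conjugates produce, in a fixed base coordinate $x_0$, every element of $B$'' is the crux, and it is not justified. If you want an element of $G_0$ whose $B$-part is supported only at $x_0$, the contributions of $\eta$ at all the other sites $x_j$ must cancel. The natural (and essentially only) way to force this is to form commutators $[\eta^{g_i},\eta^{g_j}]$ for suitable $g_i,g_j\in H$; and for these commutators to be supported precisely at $x_0$ one needs a combinatorial condition on the sequence $(x_i)$ that the paper calls \emph{rectifiability}. Even then, the value obtained at $x_0$ is $[b_i,b_j]$, so one only recovers the derived subgroup of $\langle b_1,b_2,\dots\rangle$, not all of $B$. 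The paper handles this by a substantial preliminary step (its Section~3, Proposition~\ref{prop:imbed'}): it first imbeds $B$ into $[H',H']$ for an auxiliary countable group $H'$ that is still of locally subexponential growth (a directed union of finite extensions of finite powers of $B$, built via iterated ``finite-valued'' wreath products with locally finite groups and with $\Q/\Z$). Only then does it run the wreath-product construction with the generators of $H'$ in place of those of $B$. Your proposal skips this step entirely.

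\textbf{The growth argument.} Your inequality $|B_{G_0}(n)|\le |B_H(n)|\cdot\beta_{i(n)}(n)^{\sigma(n)}$ is in the right spirit but omits two ingredients the paper uses. First, one must also count the possible \emph{supports} of the $B$-part inside the Schreier graph; this is where linear growth of $X$ and sublinear \emph{inverted orbit} growth (not merely ``activity'') enter. Second, and more seriously, the diagonal balancing you describe is not carried out by a direct estimate in the paper; instead the paper requires the site sequence to be both \emph{spreading} and \emph{locally stabilizing}, and then shows (Proposition~\ref{prop:Wconvergence}) that for a suitable choice of $n(i)$ the ball of radius $m(i)$ in $W=\langle H,\eta\rangle$ is literally identical to the ball of radius $m(i)$ in the group $W_i=\langle H,\eta_i\rangle$ with $\eta_i$ \emph{finitely} supported. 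Since each $W_i$ sits in a restricted wreath product $\langle b_1,\dots,b_i\rangle\wr_X H$ and the latter has subexponential growth by the subexponential wreathing property, one can choose $m(i)$ with $v_{W_i}(m(i))\le\epsilon_i^{m(i)}$ and conclude. The locally-stabilizing hypothesis is what makes the identification of balls possible; without it (as the paper notes in a remark) $W$ can have exponential growth however sparse the sites are.
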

Furthermore, this group may be assumed to have two generators, see
Remark~\ref{rem:2gen}, and to contain $B$ in its derived subgroup.

In contrast, there exist nilpotent (and even abelian) countable groups
that do not imbed into finitely generated nilpotent groups. Gromov's
theorem mentioned above has the consequence that there exist countable
groups of locally polynomial growth that do not imbed in groups of
polynomial growth. Mann noted
in~\cite{mann:howgroupsgrow}*{Corollary~9.11} that torsion-free groups
locally of polynomial growth of bounded degree are also virtually
nilpotent.

It is a tantalizing open question to understand which properties are
shared by groups of intermediate growth and by nilpotent and virtually
nilpotent groups.  It is clear that a group of intermediate growth
cannot contain non-abelian free subgroups or even free subsemigroups.
Groups of intermediate growth were constructed by Grigorchuk in
\cite{grigorchuk:growth}, and his first example, known as the ``first
Grigorchuk group'', admits a pair of dilating endomorphisms with
commuting images.  This property can be viewed as a higher dimensional
analogue of groups with dilation; and any group admitting a dilation
has polynomial growth.

We may also ask which groups may appear as subgroups of a specific
group of intermediate growth such as the first Grigorchuk group
$\grig$. For example, $\grig$ is known to contain every finite
$2$-group, and all its subgroups are countable, residually-$2$ and
have locally smaller growth.

There are other restrictions, apart from these obvious ones, for a
countable group to be imbedded as a subgroup of a generalised
Grigorchuk group. For example, only a finite number of primes appears
as exponents in a Grigorchuk group~\cite{grigorchuk:pgps}; see
also~\cite{bartholdi-g-s:bg}*{\S3.6}.  Extensions of Grigorchuk groups
constructed by the authors in~\cite{bartholdi-erschler:permutational}
admit a larger class of possible subgroups, but some restrictions
appear nevertheless. In particular, Theorem~\ref{thm:imbed} gives the
first groups of subexponential growth containing $\Q$.

\subsection*{Acknowledgments}
The authors are grateful to the anonymous referees for their comments,
which helped improve the presentation of the article.

\section{Sketch of the proofs}

The original imbedding result by
Higman-Neumann-Neumann~\cite{higman-n-n:embed} mentioned in the
introduction proceeds by a sequence of ``HNN extensions''.  We recall
the later construction by Neumann-Neumann~\cite{neumann-n:embed},
which uses wreath products rather than HNN extensions. The
\emph{unrestricted} wreath products of two groups $H,G$ is the group
$H\wrwr G=H^G\rtimes G$, the split extension of the set of maps $G\to
H$ by $G$, where the action of $g\in G$ on $f\colon G\to H$ is
${}^gf(x)=f(xg)$. The Neumann-Neumann construction proceeds in two
steps:

(i) starting with a countable group $B$, one imbeds it into a countable
subgroup $G$ of the unrestricted wreath product $B\wrwr\Z$ in such a
way that $B$ is imbedded into the commutator group $[G,G]$. The group
$G$ is generated by $\Z$ and, for all $b\in B$, the function
$f_b\colon\Z\to B$ defined by $f_b(m)=b^m$. Denoting by $t$ the
generator of $\Z$, we see that $[t,f_b]$ is the constant function $b$;
so $B$ is in fact imbedded in $[t,G]$.

(ii) starting with a countable group $G$, one imbeds the commutator
subgroup $[G,G]$ into a two-generated subgroup $W$ of the unrestricted
wreath product of $G\wrwr\Z$. More generally, one constructs
imbeddings into $G\wrwr P$ for a finitely generated group
$P$. Denoting a generating set of $G$ by $\{b_1,b_2,\dots\}$, the
group $W$ is generated by $P$ and $f\colon P\to G$ with $f(x_i)=b_i$
along a sparse-enough sequence $(x_i)_{i\ge1}$ of elements of $P$. In
fact, since it suffices in~(i) to imbed $[t,G]$ in $W$, one sets
$f(1)=t$ and the exact requirement on the sequence $(x_i)$ is:
$x_i\neq1$ for all $i$; all $x_i$ are distinct; and $x_i
x_j\not\in\{1,x_k\}$ for all $i,j,k\in\N$. One then sees that
$[f,f^{x_i^{-1}}]$ is a function supported only at $1$, with value
$[t,b_i]$ there. This is the imbedding of $[t,G]$.

The combination of both steps imbeds $B$ into the finitely generated
group $W$. If $B$ is solvable, then so are $B\wrwr\Z$ and $G$; and
similarly, if $G$ is solvable, then so are $G\wrwr\Z$ and $W$.

This construction may be applied to an arbitrary countable group $B$,
but some properties of $B$, such as amenability, may be lost along the
way. Olshansky and Osin introduce in~\cite{olshanskii-osin:qiembedding}
the following slightly stronger condition on $(x_i)_{i\ge1}$: by
definition, a \emph{parallelogram} in a sequence $(x_i)_{i\ge1}$ is a
quadruple of elements $p_1\neq p_2\neq p_3\neq p_4\neq p_1$, each
belonging to $\{x_i\}$, such that $p_1p_2^{-1}p_3p_4^{-1}=1$. A
sequence is \emph{parallelogram-free} if it contains no
parallelogram. They show that, if $(x_i)$ is parallelogram-free, then
the group $W$ is obtained from $G$ and $P$ by extensions, subgroups,
quotients and directed limits, so in particular is amenable as soon as
$G$ and $P$ are amenable.

They also modify slightly step~(i), by defining rather $f_b(m)=b$ for
$m\ge0$ and $f_b(m)=1$ for $m<0$; then $[t,f_b]$ is the function
supported at $0$ with value $b$ there, and the group $G=\langle
t,f_b\colon b\in B\rangle$ is also obtained from $B$ and $\Z$ by
elementary operations, so is amenable as soon as $B$ is amenable.

Note that the group $W$ contains the standard wreath product $B\wr\Z$,
so always has exponential growth.

\subsection{Imbedding in groups of subexponential growth}
Our goal is, starting from a countable group $H$ of locally
subexponential growth, to construct a finitely generated group $C$ of
subexponential growth. We exhibit analogues of steps~(i) and~(ii)
among \emph{permutational wreath products}. Given groups $H,G$ and an
action of $G$ on a set $X$, the \emph{unrestricted permutational
  wreath product} is $H\wrwr_X G=H^X\rtimes G$, and the
\emph{restricted permutational wreath product} is the extension of
finitely supported functions $X\to H$ by $G$.

Our previous work~\cite{bartholdi-erschler:permutational} gives a
criterion, in terms of \emph{inverted orbits}, that guarantees that
the restricted permutational wreath product $W=H\wr_X G$ has
subexponential growth as soon as $H$ and $G$ have subexponential
growth. The inverted orbit of a point $x\in X$ under a word
$w=g_1\dots g_n$ in $G$ is the set $\{x g_1\dots g_n,x g_2\dots
g_n,\dots,x g_n, x\}$. If its cardinality may be bounded sublinearly
in $n$, then $W$ has subexponential growth. We compare subgroups
$\langle G,f\rangle$ of the unrestricted wreath product with $W$ to
bound its growth.

Ad step~(i), we show in Proposition~\ref{prop:imbed'} that for every
group $B$ there exists a group $G$ that is a directed union of finite
extensions of finite powers of $B$ and such that $[G,G]$ contains
$B$. In particular, if $B$ has locally subexponential growth, so does
$G$.

Ad step (ii), we need to consider separately the group $P$ and the set
$X$ on which it acts. As a replacement for parallelogram-free
sequences, we introduce \emph{rectifiable} sequences, which are
sequences $(x_i)$ in $X$ such that, for all $i\neq j$, there exists
$g\in P$ with $x_i g=x_j$ and $x_k g\neq x_\ell$ for all $\ell\neq
k\neq i$. We show that such sequences exist in the action of the first
Grigorchuk group on the orbit of a ray, and more generally for all
``weakly branched'' groups.

The next step in the proof is an argument controlling the growth of a
subgroup of the form $W=\langle G,f\rangle\le B\wrwr_X G$, for a
function $f\colon X\to B$ with sparse-enough (but infinite!)
support. The rectifiability of the sequence $(x_i)$ guarantees that
functions with singleton support and arbitrary values in $[B,B]$
belong to $W$. Using the sparseness of the support of $f$, we show
that balls in $W$ can be approximated by balls in subgroups of
restricted wreath products $\langle S\rangle\wr_X f$ for finite
subsets $S$ of $B$. By~\cite{bartholdi-erschler:permutational}, these
restricted wreath products have subexponential growth. We recall that,
in general, a limit in the Cayley topology of groups of subexponential
growth may have exponential
growth~\cite{bartholdi-erschler:orderongroups}*{Theorem~C}; the Cayley
topology on the space of finitely generated groups is the topology in
which groups are close if their labeled Cayley graphs agree on a large
ball. We control more precisely the approximation of $W$ so that the
growth estimates pass to the limit. Finally, in contrast with standard
wreath products, the space $X$ is not homogeneous, so and extra
condition of stabilisation of balls around the $x_i$ is required (even
to ensure that $W$ be amenable).

\section{Imbedding in the derived subgroup}\label{ss:imbedG'}

Let $B$ be a group. We call a group $G$ \emph{hyper-$B$} if it is a
directed union of finite extensions of finite powers of $B$. In this
section, our goal is to prove the following proposition.

\newcommand\wrf{\operatorname{\wr^{\text{\upshape f.v.}}}}

\begin{proposition}\label{prop:imbed'}
  Let $B$ be a group. Then there exists a hyper-$B$ group $G$ such
  that $[G,G]$ contains $B$ as a subgroup. In particular, if $B$ has
  locally subexponential growth, then so does $G$.

  If $B$ is infinite, then $G$ may furthermore be supposed to have the
  same cardinality as $B$.
\end{proposition}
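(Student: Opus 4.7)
I plan to build $G$ as a restricted permutational wreath product $G = B \wr_\N Q$, where $Q=\bigcup_n S_n$ is the locally finite group of finitary permutations of $\N$ acting naturally on $\N$. Writing $G_n := B^n \rtimes S_n \subset G$, we see that $G = \bigcup_n G_n$ is a directed union of finite extensions of finite powers of $B$, so $G$ is hyper-$B$ by construction. Its cardinality is $|B|$ whenever $B$ is infinite (for $|B|>\aleph_0$, replace $\N$ by any index set of cardinality $|B|$ and $Q$ by its analogue).

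The substantive step is to construct an injective homomorphism $\iota\colon B \hookrightarrow G$ landing in $[G,G]$. Since $Q$ acts transitively on $\N$, the abelianization of $G$ is $B^{\mathrm{ab}} \times C_2$ via the ``total value and sign'' map. A homomorphism with image in $B^{(\N)} \subset G$ is specified by finitely many endomorphisms $\iota_1,\ldots,\iota_k\in\operatorname{End}(B)$ via $\iota(b) = (\iota_1(b),\ldots,\iota_k(b),1,1,\ldots)$, and its image lies in $[G,G]$ iff $\sum_i \bar\iota_i = 0$ in $\operatorname{Hom}(B,B^{\mathrm{ab}})$, where $\bar\iota_i$ denotes $\iota_i$ post-composed with $\pi\colon B\to B^{\mathrm{ab}}$. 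Setting $\iota_1 = \mathrm{id}_B$ ensures injectivity and reduces the task to finding further endomorphisms $\iota_2,\ldots,\iota_k$ of $B$ with $\sum_{i\ge 2}\bar\iota_i = -\mathrm{id}_{B^{\mathrm{ab}}}$.

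The hard part is this algebraic realization of $-\mathrm{id}_{B^{\mathrm{ab}}}$ as a sum of abelianized endomorphisms of $B$. It is immediate when $B^{\mathrm{ab}}$ is $2$-torsion (take $\iota_2 = \mathrm{id}$, so that $2\bar{\mathrm{id}} = 0$), when $B^{\mathrm{ab}}$ has finite exponent $m$ (take $m-1$ further copies of $\mathrm{id}$), and when the abelianization sequence $1\to[B,B]\to B\to B^{\mathrm{ab}}\to 1$ splits via a homomorphism section $s$ (take $\iota_2 = s\circ(-\mathrm{id})\circ\pi$). For the general case I expect the argument to proceed by a finer construction: either partition $\N$ into several $Q$-orbits so that the sum-to-zero condition can be satisfied orbit by orbit, or first embed $B$ into a slight enlargement $\widetilde B$ that admits a section and then verify that the resulting group is still a directed union of finite extensions of finite powers of $B$. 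Once $\iota$ is in hand, the ``in particular'' statement about local subexponential growth and the cardinality assertion are immediate, since finite products, finite extensions, and directed unions all preserve local subexponential growth and the cardinality $|B|$.
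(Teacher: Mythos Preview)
Your framework is clean, but the step you yourself flag as ``the hard part'' is a genuine gap, and neither of your suggested fixes closes it. In your restricted wreath product $G=B\wr_\N Q$, an embedding $\iota(b)=(\iota_1(b),\dots,\iota_k(b),1,\dots)$ with $\iota_1=\mathrm{id}$ lands in $[G,G]$ precisely when $-\pi$ lies in the additive submonoid of $\operatorname{Hom}(B,B^{\mathrm{ab}})$ generated by $\{\pi\circ\phi:\phi\in\operatorname{End}(B)\}$. There is no reason this submonoid contains $-\pi$: for a group $B$ with $B^{\mathrm{ab}}\cong\Z$ whose every endomorphism induces a non-negative integer on $B^{\mathrm{ab}}$, no finite sum can give $-1$. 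Splitting $\N$ into several $Q$-orbits only replaces one sum-to-zero condition by several, each of the same shape; and passing to an enlargement $\widetilde B$ in which the section exists makes $G$ hyper-$\widetilde B$, and you give no mechanism to recover hyper-$B$.

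The paper's construction is organised quite differently and avoids endomorphisms of $B$ altogether. Two features are essential. First, it works not in the restricted wreath product but in the \emph{finite-valued} variant $H\wrf U=\{(\phi,u):\#\phi(U)<\infty\}$; this is still hyper-$B$ over a locally finite $U$, but allows infinitely supported functions. Second, it treats the free-abelian and torsion parts of $B^{\mathrm{ab}}$ separately. One first chooses $C\le B$ with $[B,B]\le C$, $C/[B,B]$ free abelian with basis $X$, and $T:=B/C$ torsion; for each $x\in X$ one has a retraction $\theta_x\colon C\to\langle b_x\rangle$ sending $b_x\mapsto b_x^{-1}$. These $\theta_x$ are homomorphisms defined only on $C$, not endomorphisms of $B$, and there may be infinitely many of them; the embedding $\Phi_0\colon B\to B\wrf(T\times F)$ places $b$ diagonally along $T\times\{1\}$ and the correcting values $\theta_x(\cdots)$ along $T\times\{x\}$, so that $\Phi_0(C)\subseteq[G_0,G_0]$. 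The remaining torsion quotient $T$ is then handled by a second $\wrf$ layer over $\Q/\Z$, using a ``staircase'' element $\psi(r)=\Phi_0(b)^{\lfloor rn\rfloor}$ to exhibit each $\Phi(b)$ as a commutator times an element already known to lie in the derived subgroup. Your single-layer $B\wr_\N S_\infty$ with coordinate maps in $\operatorname{End}(B)$ cannot reproduce either device.
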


In order to prove Proposition~\ref{prop:imbed'}, we first introduce
the following notation. For groups $H,U$ we denote by
\[H\wrf U=\{(\phi,u)\in H^U\times U\colon\#\phi(U)<\infty\}
\]
the subgroup of the unrestricted wreath product $H^U\rtimes U$
in which the function $U\to H$ takes finitely many values. Note that it is
a subgroup, because if $(\phi,u)^{-1}(\phi',u')=(\phi'',u^{-1}u')$
then $\phi''(U)\subseteq\phi(U)^{-1}\phi'(U)$ is finite.

\begin{lemma}\label{lem:hyperhyper}
  Let $G$ be a hyper-$B$ group, and let $H$ be a hyper-$G$ group. Then
  $H$ is hyper-$B$.
\end{lemma}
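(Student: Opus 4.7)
The plan is to combine two auxiliary reductions: (a) a finite extension of a hyper-$B$ group is hyper-$B$; and (b) a directed union of hyper-$B$ groups (indexed by a directed set) is again hyper-$B$. Writing $G=\bigcup_{\beta\in J}G_\beta$ and $H=\bigcup_{\alpha\in I}H_\alpha$ for the given directed unions, with $G_\beta$ a finite extension of $B^{m_\beta}$ and $H_\alpha$ a finite extension of $G^{n_\alpha}$, each $G^{n_\alpha}$ is immediately hyper-$B$ via the directed family $\{G_\beta^{n_\alpha}\}_\beta$, whose members are finite extensions of $B^{n_\alpha m_\beta}$. Applying (a) to each $H_\alpha$ and then (b) to $\{H_\alpha\}_\alpha$ delivers the conclusion.

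For (a), let $A$ be hyper-$B$ via $\{A_\beta\}_\beta$, and let $A\triangleleft K$ with $Q:=K/A$ finite. Fix a section $\tilde Q\subseteq K$ of the quotient containing $1$, and let $c\colon Q\times Q\to A$ be the resulting factor system. Choose $\beta_0$ so that $A_{\beta_0}$ contains the (finitely many) values of $c$, and for $\beta\ge\beta_0$ set
\[\tilde A_\beta:=\langle \tilde q\,A_\beta\,\tilde q^{-1}:\tilde q\in\tilde Q\rangle.\]
Using the cocycle identity $\tilde q'\tilde q=c(q',q)\,\tilde{q'q}$ together with $c(q',q)\in A_{\beta_0}\subseteq\tilde A_\beta$, one checks that $\tilde A_\beta$ is stable under conjugation by $\tilde Q$. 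Then $L_\beta:=\tilde A_\beta\cdot\tilde Q$ is a subgroup of $K$ and a finite extension of $\tilde A_\beta$, and the family $\{L_\beta\}_{\beta\ge\beta_0}$ is directed with union $K$.

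The main obstacle is showing that $\tilde A_\beta$ itself is a finite extension of a finite power of $B$. Each conjugate $\tilde q\,A_\beta\,\tilde q^{-1}$ is isomorphic to $A_\beta$ and so of the right form, but the subgroup they generate inside $A$ is not automatically so. My plan is to pass to a cofinal refinement of the directed family $\{A_\beta\}$ whose members are setwise $\tilde Q$-invariant, built by iteratively taking $\tilde Q$-closures inside the hyper-$B$ group $A$ and exploiting the finiteness of $\tilde Q$ together with the directedness of $\{A_\beta\}$. Once that is in place, $\tilde A_\beta$ coincides with a member of the refined family and hence is a finite extension of a finite power of $B$. Step (b) is then straightforward: combining the families $\{L_{\alpha,\beta}\}_\beta$ from (a) across $\alpha$ gives a directed family for $H$, with directedness obtained by first taking $\alpha'\ge\alpha_1,\alpha_2$ in the $\alpha$-system and then invoking the $\beta$-directedness inside $H_{\alpha'}$.
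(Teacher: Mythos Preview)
Your route via the two reductions (a) and (b) is genuinely different from the paper's. The paper does not attempt to show that a finite extension of a hyper-$B$ group is again hyper-$B$; instead it uses the Kaloujnine--Krasner embedding twice. Given $h\in H$, it sits in some $H_\alpha$, which embeds in $G\wr F$ for a finite group $F$; the finitely many $G$-coordinates of $h$ then lie in a single $G_\beta$, which in turn embeds in $B\wr E$; hence $h$ lies in (a subgroup of $H$ that embeds in) $G_\beta\wr F\hookrightarrow(B\wr E)\wr F\cong B\wr_{E\times F}(E\wr F)$, visibly a finite extension of $B^{|E|\cdot|F|}$. The point is that the wreath product already provides a canonical $F$-invariant overgroup, so the invariance problem you isolate in (a) never arises.

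Your plan for (a), by contrast, has a real obstruction. To produce a $\tilde Q$-invariant cofinal refinement you would need, for each $A_\beta$, some $A_{\beta'}$ containing every conjugate $\tilde q\,A_\beta\,\tilde q^{-1}$. But these conjugates are not members of the directed family $\{A_\gamma\}$, and since each contains a copy of $B^{m_\beta}$ (which is typically infinitely generated), directedness of $\{A_\gamma\}$ gives no mechanism for swallowing them into a single $A_{\beta'}$. Iterating the ``$\tilde Q$-closure'' does not help for the same reason.

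There is also a gap in your (b), which you call straightforward. Passing to $\alpha'\ge\alpha_1,\alpha_2$ puts $L_{\alpha_1,\beta_1}$ and $L_{\alpha_2,\beta_2}$ inside $H_{\alpha'}$, but neither is a member of the family $\{L_{\alpha',\beta}\}_\beta$, and being (possibly infinitely generated) finite extensions of powers of $B$, they need not be contained in any single $L_{\alpha',\beta}$. So ``$\beta$-directedness inside $H_{\alpha'}$'' does not yield a common upper bound. The paper's wreath-product argument sidesteps this as well: once one has the concrete containers $G_\beta\wr F_\alpha$, directedness in $(\alpha,\beta)$ is immediate because enlarging $\beta$ enlarges $G_\beta$ and enlarging $\alpha$ enlarges the finite top group compatibly.
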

\begin{proof}
  Consider $h\in H$; then $h$ belongs to a finite extension of a
  finite power of $G$, which may be assumed of the form $G\wr F$ for a
  finite group $F$. Let us write $h=\phi f$ with $\phi\colon F\to G$
  and $f\in F$; then $\phi(f)$ belongs for all $f\in F$ to a finite
  extension of a finite power of $B$, which can be assumed to be the
  same for all $f$. This extension may be assumed to be of the form
  $B\wr E$ for a finite group $E$. It follows that $h$ belongs to
  $B\wr_{E\times F}(E\wr F)$, a finite extension of a finite power of
  $B$; so $H$ is hyper-$B$.
\end{proof}

\begin{lemma}\label{lem:hyperB}
  If $H$ is a hyper-$B$ group and $U$ is locally finite, then $H\wrf
  U$ is a hyper-$B$ group.
\end{lemma}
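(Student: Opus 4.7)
The plan is to show directly that every finitely generated subgroup of $H\wrf U$ sits inside a subgroup of $H\wrf U$ which is itself a finite extension of a finite power of $B$; ordering such subgroups by inclusion then exhibits $H\wrf U$ as the required directed union. So fix finitely many elements $(\phi_1,u_1),\dots,(\phi_k,u_k)\in H\wrf U$. Since $U$ is locally finite, $F:=\langle u_1,\dots,u_k\rangle\le U$ is finite. Since each $\phi_i$ takes only finitely many values, $S:=\bigcup_i\phi_i(U)$ is a finite subset of $H$, and by hyper-$B$-ness of $H$ I may enlarge $S$ to a subgroup $K\le H$ containing some $B^m$ with $[K:B^m]<\infty$.

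Right multiplication by $F$ on $U$ is free, partitioning $U$ into orbits of size $|F|$; choose a set of orbit representatives $R\subset U$ to get an $F$-equivariant bijection $R\times F\cong U$, $(r,f)\mapsto rf$, where $F$ acts trivially on $R$ and by right translation on itself. For each $r\in R$ define its \emph{profile}
\[
\pi(r)\colon\{1,\dots,k\}\times F\to S,\qquad (i,f)\mapsto\phi_i(rf).
\]
Only finitely many profiles occur, because $\pi(R)\subseteq S^{\{1,\dots,k\}\times F}$ with $S,F$ finite; set $P:=\pi(R)$.

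Now consider
\[
L:=\bigl\{(\psi,v)\in H\wrf U : v\in F,\ \psi(U)\subseteq K,\ \psi(rf)\text{ depends only on }(\pi(r),f)\bigr\}.
\]
Functions in $L$ factor through the finite set $P\times F$, so they are finite-valued and therefore $L\subseteq H\wrf U$. The right $F$-action on $U$ fixes orbit labels and right-translates the $F$-coordinate, so $L$ is a subgroup, and abstractly $L\cong K^{P\times F}\rtimes F=(K^P)\wr F$. Since $[K:B^m]<\infty$, this is a finite extension of $B^{m|P||F|}$, i.e.\ of a finite power of $B$. Each generator $(\phi_i,u_i)$ lies in $L$ because $\phi_i(rf)$ is by construction a coordinate of $\pi(r)$ at $f$, so depends only on $(\pi(r),f)$. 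The one delicate point to check is the $F$-equivariance of $R\times F\cong U$ that makes $L$ closed under the semidirect-product multiplication; this is immediate from freeness of right multiplication by $F\le U$ on $U$.
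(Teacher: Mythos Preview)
Your argument is correct and the underlying construction is essentially the one in the paper: your subgroup $L$ is nothing but $K^{\mathscr P}\rtimes F$ for the finite $F$-invariant partition $\mathscr P$ of $U$ whose blocks are the sets $\{rf:\pi(r)=p\}$ for $(p,f)\in P\times F$. The paper phrases this more abstractly, considering all pairs $(E,\mathscr P)$ with $E\le U$ finite and $\mathscr P$ an $E$-invariant finite partition, and the corresponding subgroups $H^{\mathscr P}\rtimes E$; it then concludes that $H\wrf U$ is hyper-$H$ and invokes Lemma~\ref{lem:hyperhyper} to pass to hyper-$B$. You fold these two steps into one by replacing $H$ with a subgroup $K$ already of the form ``finite extension of $B^m$'', which is a perfectly good shortcut.

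One point deserves more care. You assert that ``ordering such subgroups by inclusion then exhibits $H\wrf U$ as the required directed union'', but it is not automatic that the family of subgroups you produce is \emph{directed}: two of your $L$'s need to sit inside a third, and since $B$ (hence each $L$) may fail to be finitely generated, you cannot simply feed a finite generating set of $L_1\cup L_2$ back into your construction. The fix is to observe, as the paper does, that the parameters themselves form a directed poset: given $(F_1,K_1,\mathscr P_1)$ and $(F_2,K_2,\mathscr P_2)$, choose a finite $F_3\le U$ containing $F_1,F_2$ (local finiteness), a $K_3$ containing $K_1,K_2$ (directedness of the hyper-$B$ family in $H$), and take $\mathscr P_3$ to be the $F_3$-wedge of $\mathscr P_1$ and $\mathscr P_2$; then $K_3^{\mathscr P_3}\rtimes F_3$ contains both $L_1$ and $L_2$. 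With this sentence added, your proof is complete.
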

\begin{proof}
  We first show that $H\wrf U$ is hyper-$H$. By hypothesis, $U$ is a
  directed union of finite subgroups $E$. The partitions $\mathscr
  P_0$ of $U$ into finitely many parts also form a directed poset; and
  for every such partition $\mathscr P_0$ and every finite subgroup $E\le U$
  there exists a finite partition $\mathscr P$ of $U$ that is
  invariant under $E$ and refines $\mathscr P_0$, namely the wedge (=
  least upper bound) of all $E$-images of $\mathscr P_0$.

  Consider now the directed poset of pairs $(E,\mathscr P)$ consisting
  of finite subgroups $E\le U$ and $E$-invariant partitions of $U$.
  Consider the corresponding subgroups $H^{\mathscr P}\rtimes E$ of
  $H\wrf U$. If $(E,\mathscr P)\le(E',\mathscr P')$ then $H^{\mathscr
    P}\rtimes E$ is naturally contained in $H^{\mathscr P'}\rtimes
  E'$, so these subgroups of $H\wrf U$ form a directed poset, which
  exhausts $H\wrf U$.

  It follows that $H\wrf U$ is a hyper-$H$ group, and we are done by
  Lemma~\ref{lem:hyperhyper}.
\end{proof}

\begin{lemma}\label{lem:CB}
  Let $B$ be a group. Then there exists a subgroup $C$ of $B$,
  containing $[B,B]$, such that $B/C$ is torsion and $C/[B,B]$ is free
  abelian.
\end{lemma}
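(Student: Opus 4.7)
The statement is really a fact about abelian groups in disguise: passing to the abelianization $A=B/[B,B]$, the required $C$ corresponds to a subgroup $\overline C\le A$ such that $\overline C$ is free abelian and $A/\overline C$ is torsion. So the plan is first to reduce to the abelian case, then construct $\overline C$ inside $A$, then lift.

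The construction of $\overline C$ is a Zorn's lemma argument on $\mathbb{Z}$-linearly independent subsets of $A$. Call a subset $S\subseteq A$ \emph{independent} if whenever $n_1 a_1+\dots+n_k a_k=0$ with distinct $a_1,\dots,a_k\in S$ and $n_i\in\mathbb{Z}$, then all $n_i=0$. The collection of independent subsets is non-empty (the empty set is independent) and closed under unions of chains, so by Zorn's lemma it contains a maximal element $S_0$. The subgroup $\overline C:=\langle S_0\rangle\le A$ is then free abelian on $S_0$.

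The key check is that $A/\overline C$ is torsion. Given $a\in A$, maximality of $S_0$ says that $S_0\cup\{a\}$ is no longer independent; hence there is a non-trivial relation $n a+\sum_{i}n_i a_i=0$ with $a_i\in S_0$. If $n=0$ this would contradict the independence of $S_0$, so $n\neq 0$, and then $n a\in\overline C$, so $a+\overline C$ has finite order in $A/\overline C$. This gives $A/\overline C$ torsion.

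Finally, let $\pi\colon B\to A$ be the quotient map and set $C:=\pi^{-1}(\overline C)$. Then $C\supseteq[B,B]=\ker\pi$, and $C/[B,B]=\overline C$ is free abelian while $B/C\cong A/\overline C$ is torsion. There is no genuine obstacle in this argument; the only slightly delicate point is phrasing independence so that maximality forces torsion in the quotient, but that is immediate once the non-trivial relation is unpacked.
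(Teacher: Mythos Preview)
Your proof is correct and follows essentially the same idea as the paper: both reduce to the abelianization $A=B/[B,B]$ and select a maximal $\mathbb{Z}$-independent subset, whose span is free abelian with torsion quotient, then pull back to $B$. The paper packages this as choosing a $\mathbb{Q}$-basis of $A\otimes_{\mathbb{Z}}\mathbb{Q}$ from the image of $A$ (so that $B/C\otimes_{\mathbb{Z}}\mathbb{Q}=0$ gives the torsion conclusion), which is just your Zorn's-lemma argument rephrased in the language of vector spaces.
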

\begin{proof}
  $B/[B,B]\otimes_\Z\Q$ is a $\Q$-vector space, hence has a basis,
  call it $X$. It generates a free abelian group $\Z X$ within
  $B/[B,B]$, whose full preimage in $B$ we call $C$. Then
  $B/C\otimes_\Z\Q=0$ so $B/C$ is torsion.
\end{proof}

We set up the following notation for the proof of
Proposition~\ref{prop:imbed'}. We choose a subgroup $C\le B$ as in
Lemma~\ref{lem:CB} and write $T:=B/C$. We choose a basis $X$ of
$C/[B,B]$, for every $x\in X$ we choose an element $b_x\in C$
representing it, and we define a homomorphism $\theta_x\colon
C\to\langle b_x\rangle\subseteq B$, trivial on $[B,B]$, by
$\theta_x(b_x)=b_x^{-1}$ and $\theta_x(b_y)=1$ for all $y\neq x\in
X$. In particular, we have for all $b\in C$
\[b\cdot\prod_{x\in X}\theta_x(b)\in[B,B]
\]
and the product is finite.

We write $\pi\colon B\to T$ the natural projection, and define
inductively a set-theoretic section $\sigma\colon T\to B$ as
follows. Since $T$ is torsion, it is locally finite, hence may be
written as a directed union $T=\bigcup_{\alpha\in I} T_\alpha$ of
finite groups, over a well-ordered directed set $I$. Assume $\sigma$
has already been defined on
$T'_\alpha:=\bigcup_{\beta<\alpha}T_\beta$. Choose a transversal
$T''_\alpha$ of $T'_\alpha$ in $T_\alpha$, namely a set of coset
representatives of $T'_\alpha$ in $T_\alpha$, and define $\sigma$ on
$T''_\alpha$ by choosing arbitrarily for each $t''\in T''_\alpha$ a
$\pi$-preimage in $B$. Extend then $\sigma$ to $T_\alpha$ by
$\sigma(t''t')=\sigma(t'')\sigma(t')$ for $t''\in T''_\alpha,t'\in
T'_\alpha$.

Let $F$ be a locally finite group of cardinality $>\#X$, and fix an
imbedding of $X$ in $F\setminus\{1\}$.  As a first step, we consider
the group $G_0=B\wrf(T\times F)$, and define a map $\Phi_0\colon B\to
G_0$ as follows:
\begin{equation}\label{eq:Phi_0}
  \Phi_0(b)=(\phi,\pi(b),1)\text{ with }\phi(t,f)=\begin{cases}
    b & \text{ if }f=1,\\
    \theta_f(\sigma(t)b\sigma(t\pi(b))^{-1}) & \text{ if }f\in X,\\
    1 & \text{ otherwise.}
  \end{cases}
\end{equation}

\begin{lemma}\label{lem:Phi_0}
  The map $\Phi_0$ is well-defined and is an injective homomorphism into
  $G_0$.
\end{lemma}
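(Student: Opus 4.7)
The plan is to check, in order, injectivity, multiplicativity, and the one non-obvious point, namely that for each $b\in B$ the function $\phi\colon T\times F\to B$ attached to $b$ takes only finitely many values (so that $\Phi_0(b)$ actually lies in $G_0$). Injectivity is immediate: $\phi(t,1)\equiv b$, so $\Phi_0(b)=1$ forces $b=1$. Multiplicativity is a direct check. The product in $G_0$ gives $\Phi_0(b_1)\Phi_0(b_2)=(\phi_{b_1}\cdot{}^{(\pi(b_1),1)}\phi_{b_2},\,\pi(b_1 b_2),1)$, so it suffices to verify $\phi_{b_1 b_2}(t,f)=\phi_{b_1}(t,f)\,\phi_{b_2}(t\pi(b_1),f)$ pointwise: the cases $f=1$ and $f\notin X\cup\{1\}$ are trivial, while for $f=x\in X$ the identity reduces, via the homomorphism property of $\theta_x$, to the telescoping $\sigma(t)b_1\sigma(t\pi(b_1))^{-1}\cdot\sigma(t\pi(b_1))b_2\sigma(t\pi(b_1 b_2))^{-1}=\sigma(t)b_1 b_2\sigma(t\pi(b_1 b_2))^{-1}$.

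The main obstacle is finiteness of the image of $\phi$. Only the values at $(t,x)$ with $x\in X$ need attention; they equal $\theta_x(c(t))$ where $c(t):=\sigma(t)b\sigma(t\pi(b))^{-1}$, which indeed lies in $C=\ker\pi$ so that $\theta_x$ may be applied. Set $s:=\pi(b)$, write $b=\sigma(s)c_0$ with $c_0\in C$, and introduce $\kappa(t,s):=\sigma(t)\sigma(s)\sigma(ts)^{-1}\in C$, so that $c(t)=\kappa(t,s)\cdot\sigma(ts)c_0\sigma(ts)^{-1}$. Since $C$ is normal in $B$ (it contains $[B,B]$) and $[B,C]\subseteq[B,B]$, the conjugation action of $B$ on $C/[B,B]$ is trivial; combined with the fact that $\theta_x$ factors through $[B,B]$, this yields $\theta_x(c(t))=\theta_x(\kappa(t,s))\cdot\theta_x(c_0)$. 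It therefore suffices to show that the set $\{\bar\kappa(t,s):t\in T\}\subseteq C/[B,B]$ is finite: then only finitely many coordinates $x\in X$ receive a non-trivial contribution, and for each such $x$ the set $\{\theta_x(\kappa(t,s)):t\in T\}\subseteq\langle b_x\rangle$ is finite.

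This last finiteness is where the inductive construction of $\sigma$ enters. Fix $\alpha$ with $s\in T_\alpha$ and put $\mathcal V(\beta):=\{\bar\kappa(t,s):t\in T_\beta\}$; I claim $\mathcal V(\beta)\subseteq\mathcal V(\alpha)$ for every $\beta$, by transfinite induction on $\beta$. The case $\beta\le\alpha$ is obvious. For $\beta>\alpha$, the subgroup $T'_\beta$ contains $s$, so every $t\in T_\beta$ factors uniquely as $t=t''t'$ with $t''\in T''_\beta,\,t'\in T'_\beta$, and correspondingly $ts=t''(t's)$ with $t's\in T'_\beta$ (uniqueness again). The recipe for $\sigma$ therefore gives both $\sigma(t)=\sigma(t'')\sigma(t')$ and $\sigma(ts)=\sigma(t'')\sigma(t's)$, whence $\kappa(t,s)=\sigma(t'')\kappa(t',s)\sigma(t'')^{-1}$; triviality of $B$-conjugation on $C/[B,B]$ yields $\bar\kappa(t,s)=\bar\kappa(t',s)$ with $t'\in T'_\beta$, and by the induction hypothesis this value is in $\mathcal V(\alpha)$. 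Since $T_\alpha$ is finite, so is $\mathcal V(\alpha)$, which completes the proof.
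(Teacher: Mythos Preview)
Your proof is correct and follows essentially the same route as the paper's. Both arguments reduce the finiteness of the image of $\phi$ to the fact that, modulo $[B,B]$, the element $\sigma(t)b\sigma(t\pi(b))^{-1}$ depends only on the image of $t$ in the finite group $T_\alpha$ containing $\pi(b)$, using the inductive definition of $\sigma$ together with the vanishing of $\theta_x$ on $[B,B]$. The only differences are cosmetic: you package the argument via the $2$-cocycle $\kappa(t,s)=\sigma(t)\sigma(s)\sigma(ts)^{-1}$ and a transfinite induction on $\beta$, whereas the paper unravels the full product $t=t''_\omega\cdots t''_\beta u$ in one step and observes directly that $\sigma(t)b\sigma(t\pi(b))^{-1}$ is conjugate to $\sigma(u)b\sigma(u\pi(b))^{-1}$ with $u\in T_\alpha$; and you spell out the homomorphism check in more detail than the paper's one-line ``because all $\theta_f$ are homomorphisms''.
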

\begin{proof}
  To see that $\Phi_0$ is well-defined, note that the argument
  $\sigma(t)b\sigma(t\pi(b))^{-1}$ belongs to $\ker(\pi)=C$, so that $\theta_f$
  may be applied to it.

  We next show that the image of $\Phi_0$ belongs to $G_0$. Consider
  $b\in B$. Let $\alpha$ be such that $\pi(b)$ belongs to the finite
  group $T_\alpha$. Now given $t\in T$, let $\omega\in I$ be such that
  $t\in T_\omega$. Write $t$ using $T_\alpha$ and transversal elements
  as $t''_\omega\dots t''_\beta u$ with $\omega>\dots>\beta>\alpha$
  and $t''_\omega\in T''_\omega,\dots,t''_\beta\in T''_\beta,u\in
  T_\alpha$. Then
  $\sigma(t)=\sigma(t''_\omega)\dots\sigma(t''_\beta)\sigma(u)$ and
  $\sigma(t\pi(b))=\sigma(t''_\omega)\dots\sigma(t''_\beta)\sigma(u\pi(b))$,
  so that $\sigma(t)b\sigma(t\pi(b))^{-1}$ is conjugate to
  $\sigma(u)b\sigma(u\pi(b))^{-1}$, and therefore
  $\theta_f(\sigma(t)b\sigma(t\pi(b))^{-1})=\theta_f(\sigma(u)b\sigma(u\pi(b))^{-1})$
  takes only finitely many values because $\theta_f$ vanishes on
  $[B,B]$. Also, $\theta_f(\sigma(t)b\sigma(t\pi(b))^{-1})=1$ except
  for finitely many values of $f\in X$. In summary, the function
  $\phi\in B^{T\times F}$ is such that $\phi(t,f)$ takes only finitely
  many values.

  It is clear that $\Phi_0$ is injective: if $b\neq1$ and
  $\Phi_0(b)=(\phi,\pi(b),1)$ then $\phi(1,1)=b\neq1$. It is a
  homomorphism because all $\theta_f$ are homomorphisms.
\end{proof}

\begin{lemma}\label{lem:C imbeds}
  We have $\Phi_0(C)\le[G_0,G_0]$.
\end{lemma}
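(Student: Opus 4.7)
The plan is to simplify the formula for $\Phi_0(b)$ when $b\in C$, reduce the claim to the case of the generators $b_x$ via the identity preceding \eqref{eq:Phi_0}, and exhibit each $\Phi_0(b_x)$ as a single explicit commutator in $G_0$.

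I would first simplify \eqref{eq:Phi_0} when $b\in C$. Since $\sigma(t)b\sigma(t)^{-1}=b\cdot[b^{-1},\sigma(t)]$ differs from $b$ only by a commutator, and $\theta_f$ vanishes on $[B,B]$, we have $\theta_f(\sigma(t)b\sigma(t)^{-1})=\theta_f(b)$ for every $t\in T$ and $f\in X$. Since moreover $\pi(b)=1$, the formula \eqref{eq:Phi_0} becomes $t$-independent: $\Phi_0(b)=(\phi_b,1,1)$ with
\[\phi_b(t,f)=\begin{cases}b&\text{if }f=1,\\ \theta_f(b)&\text{if }f\in X,\\ 1&\text{otherwise.}\end{cases}\]

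Next, I would reduce to the case $b=b_x$. By Lemma~\ref{lem:Phi_0}, $\Phi_0$ is a homomorphism, so $\Phi_0([B,B])\subseteq[G_0,G_0]$. Combined with the identity $b\cdot\prod_{x\in X}\theta_x(b)\in[B,B]$ stated before \eqref{eq:Phi_0} (the product being finite), this gives
\[\Phi_0(b)=\Phi_0\Bigl(b\cdot\prod_{x\in X}\theta_x(b)\Bigr)\cdot\prod_{x\in X}\Phi_0(\theta_x(b))^{-1},\]
whose first factor lies in $[G_0,G_0]$. Since each $\theta_x(b)$ is a power of $b_x$, the factor $\Phi_0(\theta_x(b))$ is the corresponding power of $\Phi_0(b_x)$, and it suffices to show $\Phi_0(b_x)\in[G_0,G_0]$ for each $x\in X$.

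Finally, I would exhibit $\Phi_0(b_x)$ as a single commutator in $G_0=B\wrf(T\times F)$. Let $\phi_0\colon T\times F\to B$ be the function with $\phi_0(t,1)=b_x$ for all $t\in T$ and $\phi_0(t,f)=1$ for $f\neq 1$; its image $\{1,b_x\}$ is finite, so $(\phi_0,1,1)\in G_0$. Using ${}^{(1,x^{-1})}\phi_0^{-1}(t,f)=\phi_0^{-1}(t,fx^{-1})$, a direct computation yields
\[\bigl[(\phi_0,1,1),\,(0,1,x^{-1})\bigr]=\bigl(\phi_0\cdot{}^{(1,x^{-1})}\phi_0^{-1},\,1,1\bigr),\]
and the function $\phi_0\cdot{}^{(1,x^{-1})}\phi_0^{-1}$ is supported on $T\times\{1\}$ with value $b_x$ and on $T\times\{x\}$ with value $b_x^{-1}$, which matches the simplified $\phi_{b_x}$ from the first step.

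The only real insight is the $t$-independence of $\phi_b$ for $b\in C$; once this is noticed, the claim collapses to the routine commutator computation in the $F$-direction above.
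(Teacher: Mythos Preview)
Your argument is correct and follows essentially the same route as the paper: reduce to the generators $b_x$, then exhibit $\Phi_0(b_x)$ as a commutator of the constant-on-$T\times\{1\}$ function with value $b_x$ against the translation $(1,1,x^{-1})$. Your explicit derivation of the $t$-independence of $\phi_b$ for $b\in C$ is a point the paper leaves implicit, and your reduction via the product identity $b\cdot\prod_x\theta_x(b)\in[B,B]$ is a slight detour compared to the paper's one-line observation that $C$ is generated by $[B,B]\cup\{b_x\colon x\in X\}$; but the substance is the same.
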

\begin{proof}
  If $b\in[B,B]$ then clearly $\Phi_0(b)\in[G_0,G_0]$. Since $C$ is
  generated by $[B,B]\cup\{b_x\}_{x\in X}$, it suffices to consider
  $b=b_x$.

  We define $g\in G_0$ by
  \[g=(\psi,1,1)\text{ with }\psi(t,f)=\begin{cases}b_x & \text{ if }f=1,\\ 1 & \text{ otherwise}.\end{cases}
  \]
  Then $\Phi_0(b_x)=(\phi,1,1)$ with $\phi(t,1)=b_x$ and
  $\phi(t,x)=b_x^{-1}$, all other values being trivial, according
  to~\eqref{eq:Phi_0}; so, as was to be shown,
  \[\Phi_0(b_x)=(\phi,1,1)=({}^x\psi^{-1}\cdot\psi,1,1)=[(1,1,x^{-1}),g]\in[G_0,G_0].\qedhere\]
\end{proof}

\noindent We next define
\[G=G_0\wrf(\Q/\Z)\]
and a map $\Phi\colon B\to G$ by
\[\Phi(b)=(\phi,0)\text{ with }\phi(r)=\Phi_0(b)\text{ for all }r\in\Q/\Z.\]
\begin{lemma}\label{lem:B imbeds}
  The map $\Phi$ is an injective homomorphism, and
  $\Phi(B)\le[G,G]$.
\end{lemma}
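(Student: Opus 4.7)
The plan is to dispatch well-definedness, the homomorphism property, and injectivity of $\Phi$ essentially by inspection, and then to prove $\Phi(B)\le[G,G]$ by a telescoping construction that uses the divisibility of $\Q/\Z$ to reduce the a priori non-commutator element $\Phi_0(b)$ to an element of $\Phi_0(C)\le[G_0,G_0]$.

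First, the constant function $r\mapsto\Phi_0(b)$ takes a single value, hence has finite image, so $\Phi(b)\in G$. The homomorphism property is then immediate, because constants are $\Q/\Z$-invariant so the wreath product multiplication collapses to pointwise multiplication in $G_0$, reducing to the fact that $\Phi_0$ is a homomorphism (Lemma~\ref{lem:Phi_0}); and injectivity is clear, since $\Phi(b)=1$ forces the constant value $\Phi_0(b)$ to be $1$, whence $b=1$.

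For the inclusion $\Phi(B)\le[G,G]$, fix $b\in B$. Since $T=B/C$ is torsion, $\pi(b)$ has some finite order $n$, so $b^n\in C$ and Lemma~\ref{lem:C imbeds} gives $\Phi_0(b)^n=\Phi_0(b^n)\in[G_0,G_0]$. I would pick $u\in\Q/\Z$ of order exactly $n$ (using that $\Q/\Z$ contains elements of every finite order) together with coset representatives $R$ for $\langle u\rangle$, so that every $r\in\Q/\Z$ is uniquely $ku+r_0$ with $k\in\{0,\dots,n-1\}$, $r_0\in R$, and define $\psi(ku+r_0):=\Phi_0(b)^{-k}$. A direct computation gives $[(\psi,0),(1,u)]=(\psi\cdot{}^u\psi^{-1},0)$, whose value at $ku+r_0$ is $\Phi_0(b)$ for $k<n-1$ and $\Phi_0(b)^{-(n-1)}=\Phi_0(b)\cdot\Phi_0(b)^{-n}$ for $k=n-1$. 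Rearranging, $\Phi(b)=[(\psi,0),(1,u)]\cdot(\chi,0)^{-1}$, where $\chi\colon\Q/\Z\to G_0$ is supported on the transversal slice $E:=\{(n-1)u+r_0:r_0\in R\}$ with constant value $\Phi_0(b)^{-n}\in[G_0,G_0]$ there.

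It then suffices to put $(\chi,0)$ into $[G,G]$. Writing $\Phi_0(b)^{-n}=\prod_j[c_j,d_j]$ in $G_0$, I would define $\eta_j,\xi_j\colon\Q/\Z\to G_0$ supported on $E$ with values $c_j,d_j$ and equal to $1$ off $E$; each takes at most two values, so lies in $G$. The pointwise commutator $[\eta_j,\xi_j]$ equals $[c_j,d_j]$ on $E$ and $1$ elsewhere, and the product over $j$ recovers $\chi$; the same computation at the level of $G$ realises $(\chi,0)=\prod_j[(\eta_j,0),(\xi_j,0)]\in[G,G]$. The main obstacle is exactly the case $b\notin C$, where $\Phi_0(b)\notin[G_0,G_0]$: the divisibility of $\Q/\Z$ is essential there in order to produce an element $u$ of the required order $n$ and telescope $\Phi_0(b)$ down to a correction that does lie in $[G_0,G_0]$.
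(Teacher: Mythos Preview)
Your proof is correct and follows essentially the same telescoping strategy as the paper: the paper takes $u=1/n$ in $\Q/\Z\cong\Q\cap[0,1)$, sets $\psi(r)=\Phi_0(b)^{\lfloor rn\rfloor}$, and obtains $\Phi(b)=[(1,1/n),(\psi,0)]\cdot\Psi_n(b^n)$ with the correction $\Psi_n(b^n)$ supported on the transversal $[0,1/n)$ and lying in $[G,G]$ via Lemma~\ref{lem:C imbeds}. The only differences are cosmetic (your abstract $u$ and transversal $R$ versus the concrete $1/n$ and $[0,1/n)$, the sign on the exponent of $\psi$, and your explicit commutator decomposition of the correction term in place of the paper's appeal to the homomorphism $\Psi_n$).
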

\begin{proof}
  Clearly $\Phi$ is an injective homomorphism, since $\Phi_0$ is an
  injective homomorphism by Lemma~\ref{lem:Phi_0}.

  We identify $\Q/\Z$ with $\Q\cap[0,1)$. For every $n\in\N$, consider
  the map $\Psi_n\colon B\to G$ defined by
  \[\Psi_n(b)=(\phi,0)\text{ with }\phi(r)=\begin{cases}
    \Phi_0(b) & \text{ if }r\in[0,1/n),\\
    1 & \text{ otherwise;}
  \end{cases}
  \]
  so $\Phi=\Psi_1$.  We know from Lemma~\ref{lem:C imbeds} that
  $\Psi_n(C)$ is contained in $[G,G]$.

  Consider now $b\in B$. Since $B/C$ is torsion, there exists $n\in\N$
  such that $b^n\in C$. We define $g\in G$ by
  \[g=(\psi,0)\text{ with }\psi(r)=\Phi_0(b)^{\lfloor rn\rfloor}\text{
    for }r\in[0,1)\cap\Q.
  \]
  Let us write $h=\Phi_0(b)$, and consider the element
  $[(1,1/n),g]\cdot\Psi_n(b^n)=(\phi,0)$.  If $r\in[0,1/n)$ then
  $\phi(r)=\psi(r-1/n)^{-1}\psi(r)h^n=h$, while if $r\in[1/n,1)$ then
  $\phi(r)=\psi(r-1/n)^{-1}\psi(r)=h$; therefore
  \[\Phi(b)=[(1,1/n),g]\cdot\Psi_n(b^n)\in[G,G].\qedhere\]
\end{proof}

\begin{proof}[Proof of Proposition~\ref{prop:imbed'}]
  The first assertion is simply Lemma~\ref{lem:B imbeds}.

  Assume that $B$ has locally subexponential growth, and consider a
  finite subset $S$ of $G$. Then there exists a subgroup of $G$ that
  contains $S$ and is virtually a finite power of $B$, hence has
  subexponential growth. This shows that $G$ has locally
  subexponential growth.

  For the last assertion: if $B$ is infinite, we wish to find a
  subgroup $H$ of $G$ with the same cardinality as $B$, such that
  $\Phi$ maps into $[H,H]$. For each $b\in B$, choose a finite subset
  $S_b$ of $G$ such that $\Phi(b)\in[\langle S_b\rangle,\langle
  S_b\rangle]$, and a subgroup $G_b$, containing $S_b$, that is
  virtually a finite power of $B$. Consider the group $H$ generated by
  the union of all the $G_b$. As soon as $B$ is infinite, all $G_b$
  have the same cardinality as $B$, and so does $H$.
\end{proof}

\section{Orbits and inverted orbits}

Let $G=\langle S\rangle$ be a finitely generated group acting on the
right on a set $X$. We consider $X$ as a the vertex set of a graph
still denoted $X$, with for all $x\in X,s\in S$ an edge labelled $s$
from $x$ to $xs$. We denote by $d$ the path metric on this graph.

\begin{definition}
  A sequence $(x_0,x_1,\dots)$ in $X$ is \emph{spreading} if for all
  $R$ there exists $N$ such that if $i,j\ge N$ and $i\neq j$ then
  $d(x_i,x_j)\ge R$.
\end{definition}

\begin{example}
  If all $x_i$ lie in order on a geodesic ray starting from $x_0$ (for
  example if $X$ itself is a ray starting from $x_0$) and for all $i$
  we have $d(x_0,x_{i+1})\ge 2 d(x_0,x_i)$, then $(x_i)$ is spreading.
\end{example}

\begin{lemma}
  Equivalently, a sequence $(x_0,x_1,\dots)$ in $X$ is spreading if
  and only if for all $R$ there exists $N$ such that if $i\neq j$ and
  $i\ge N$ then $d(x_i,x_j)\ge R$.
\end{lemma}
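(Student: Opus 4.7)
The plan is to prove the two directions separately. The forward implication---that the new condition implies the original---is immediate: if $i \neq j$ and $i \ge N$ already force $d(x_i, x_j) \ge R$, then in particular this holds whenever both $i, j \ge N$ and $i \neq j$, so the same $N$ witnesses the original spreading condition.

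For the reverse direction, assume the original spreading condition and fix $R$. Apply the original condition with radius $2R$ to obtain an $N$ such that $d(x_i, x_j) \ge 2R$ whenever $i, j \ge N$ and $i \neq j$. The only pairs not directly controlled by this $N$ are those with $i \ge N$ and $j < N$ (or vice versa). The key observation is a triangle-inequality pigeonhole: for each fixed $k \in \{0,1,\dots,N-1\}$, at most one index $i \ge N$ can satisfy $d(x_i, x_k) < R$. Indeed, if two distinct indices $i, i' \ge N$ both did, the triangle inequality would give $d(x_i, x_{i'}) < 2R$, contradicting the choice of $N$.

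Since $k$ ranges over the finite set $\{0,1,\dots,N-1\}$, the collection of ``bad'' indices
\[
B = \bigl\{\,i \ge N : d(x_i, x_k) < R \text{ for some } k < N\,\bigr\}
\]
has cardinality at most $N$, and in particular is finite. Choose $N'$ strictly larger than every element of $B$ and with $N' \ge N$. Then for any $i \ge N'$ and $j \neq i$: if $j \ge N$, the choice of $N$ yields $d(x_i, x_j) \ge 2R \ge R$; if $j < N$, the definition of $N'$ excludes $i$ from $B$, so $d(x_i, x_j) \ge R$. Either way, the new condition holds with this $N'$.

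There is no serious obstacle; the entire argument is a triangle-inequality pigeonhole followed by bookkeeping over the finite initial segment $\{x_0,\dots,x_{N-1}\}$.
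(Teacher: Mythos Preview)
Your proof is correct. Both your argument and the paper's rest on the same triangle-inequality observation with the doubled radius $2R$, but the presentations differ: the paper argues by contrapositive, taking a sequence of violating pairs with $i\to\infty$, passing to a subsequence where $j$ is constant, and concluding that two large indices $i,i'$ lie within $2R$ of each other; you argue directly, observing that for each fixed $k<N$ at most one index $i\ge N$ can satisfy $d(x_i,x_k)<R$, collecting the finitely many exceptions, and pushing $N'$ past them. Your version is slightly more constructive in that it produces an explicit $N'$, while the paper's contrapositive is a line or two shorter; the underlying idea is identical.
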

\begin{proof}
  Assume the converse, namely $d(x_i,x_j)<R$ along a sequence with
  $i\to\infty$ and $j\not\to\infty$. Then, up to passing to a
  subsequence, $j$ may be assumed constant. There are then
  $i,i'\to\infty$ with $i\neq i'$ and $d(x_i,x_{i'})<2R$, so $(x_i)$
  is not spreading.
\end{proof}

\begin{definition}
  A sequence $(x_i)$ in $X$ \emph{locally stabilises} if for all $R$
  there exists $N$ such that if $i,j\ge N$ then the $S$-labelled
  radius-$R$ balls centered at $x_i$ and $x_j$ in $X$ are equal.
\end{definition}

\begin{definition}\label{def:rectifiable}
  A sequence of points $(x_i)$ in $X$ is \emph{rectifiable} if for all
  $i,j$ there exists $g \in G$ with $x_i g =x_j$ and $x_k g\ne x_\ell$
  for all $k\notin\{i,\ell\}$.
\end{definition}

For example, if $X=\Z$ and $G=\Z$ acting by translations, then
$\Sigma=\{2^i:i\in\N\}$ is rectifiable, since $2^j-2^i=2^\ell-2^k$
only has trivial solutions $i=k,j=\ell$ and $i=j,k=\ell$.

\begin{remark}
  The sequence $\Sigma=(x_i)\subseteq X$ is rectifiable if and only if
  for all $i,j$ there exists $g\in G$ with $x_i g = x_j$ and
  $\Sigma\cap\Sigma g\subseteq\{x_j\}\cup\operatorname{fixed.\!points}(g)$.
\end{remark}

The following property is closer to Olshansky-Osin's notion of
parallelogram-free
sequence~\cite{olshanskii-osin:qiembedding}*{Definition~2.3}:
\begin{definition}
  Fix a point $z\in X$. A sequence $(g_i)$ in $G$ is
  \emph{parallelogram-free at $z$} if, for all $i,j,k,\ell$ with $i\neq
  j$ and $j\neq k$ and $k\neq\ell$ and $\ell\neq i$ one has
  $z g_i^{-1}g_j g_k^{-1} g_\ell\ne z$.
\end{definition}

\begin{lemma}\label{lem:parallelogram}
  If $z\in X$ and $(g_i)$ is parallelogram-free at $z$, then $(z
  g_i^{-1})$ is a rectifiable sequence in $X$.
\end{lemma}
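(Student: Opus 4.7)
The plan is to exhibit the rectifying element explicitly: set $g = g_i g_j^{-1}$ (which is the identity when $i = j$). Then $x_i g = z g_i^{-1} g_i g_j^{-1} = z g_j^{-1} = x_j$, so the first requirement of Definition~\ref{def:rectifiable} holds. The substantive content is to show that no unwanted collision $x_k g = x_\ell$ occurs when $k \notin \{i, \ell\}$.

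A preliminary step I would carry out is showing that the sequence $(x_i)$ is injective. If $x_i = x_j$ for some $i \neq j$, then $z g_i^{-1} g_j = z$; applying the relation twice gives $z g_i^{-1} g_j g_i^{-1} g_j = z$. The index pattern $(i, j, i, j)$ satisfies all four cyclic inequalities required by the parallelogram-free hypothesis, contradicting it.

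Now assume $i \neq j$ and suppose for contradiction that $x_k g = x_\ell$ with $k \notin \{i, \ell\}$; this is equivalent to $z g_k^{-1} g_i g_j^{-1} g_\ell = z$. In the sub-case $j \neq \ell$, this is exactly a parallelogram at $z$ with index pattern $(k, i, j, \ell)$: the inequalities $k \neq i$ and $\ell \neq k$ follow from $k \notin \{i, \ell\}$, while $i \neq j$ is our assumption and $j \neq \ell$ is the sub-case, contradicting parallelogram-freeness. In the sub-case $j = \ell$, we have $x_k g = x_j = x_i g$; since $g$ acts as a bijection we get $x_k = x_i$, and injectivity of $(x_i)$ then forces $k = i$, again contradicting $k \notin \{i, \ell\}$. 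The remaining case $i = j$ is handled by $g = 1$: the condition reduces to $x_k \neq x_\ell$ whenever $k \notin \{i, \ell\}$, which follows from injectivity since $k \neq \ell$. The only delicate points are recognising the degenerate pattern $(i,j,i,j)$ needed to obtain injectivity, and the sub-case $j = \ell$, which does not fit the shape of a parallelogram directly and must be closed using the injectivity of the sequence combined with bijectivity of the action.
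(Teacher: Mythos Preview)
Your proof is correct and follows essentially the same route as the paper: both take $g=g_i g_j^{-1}$, translate $x_k g=x_\ell$ into $z g_k^{-1} g_i g_j^{-1} g_\ell=z$, and invoke parallelogram-freeness. The paper handles the degenerate cases $i=j$ and $j=\ell$ in a single sentence (``$i=j$ which implies $k=\ell$'', ``$j=\ell$ which implies $k=i$'') that tacitly relies on the same doubling trick $(i,j,i,j)$ you spelled out for injectivity, so your version is simply a more explicit rendering of the same argument.
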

\begin{proof}
  Set $x_i=z g_i^{-1}$ for all $i\in\N$. Given $i,j\in\N$, consider
  $g=g_i g_j^{-1}$, so $x_i g = x_j$. If furthermore we have $x_k
  g=x_\ell$, then we have $z g_k^{-1} g_i g_j^{-1} g_\ell = z$, so
  either $k=i$, or $i=j$ which implies $k=\ell$, or $j=\ell$ which
  implies $k=i$, or $\ell=k$. In all cases $k\in\{i,\ell\}$ as was to
  be shown.
\end{proof}

It is clear that, if $G$ is finitely generated and $X$ is infinite,
then it admits spreading and locally stabilizing sequences. Also, a
subsequence of a spreading or locally stabilizing sequence is again
spreading, respectively locally stabilizing. We give in the next
section a general construction of rectifiable sequences, and
in~\S\ref{ss:grigorchuk} a concrete example in the first Grigorchuk
group.

\subsection{Separating actions}
Consider a group $G$ acting on a set $X$. We recall that the
\emph{fixator} of the subset $Y\subseteq X$ is the set
$\operatorname{Fix}(Y):=\{g\in G\colon yg=y\text{ for all }y\in Y\}$.

\begin{definition}[Ab\'ert~\cite{abert:nonfree}]
  The group $G$ \emph{separates} $X$ if for every finite subset
  $Y\subseteq X$ and every $y_0\notin Y$ there exists
  $g\in\operatorname{Fix}(Y)$ with $y_0 g\ne y_0$.
\end{definition}

\begin{lemma}\label{lem:separating=>rectifiable}
  Let $G$ be a group acting on a non-empty set $X$ and separating
  it. Then there exists a rectifiable sequence $(x_i)$ in $X$.
\end{lemma}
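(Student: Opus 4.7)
The plan is to build $(x_i)_{i\ge0}$ inductively, defining witnesses compositionally by $g_{ij}:=g_{0i}^{-1}g_{0j}$ (with $g_{00}:=1$) from a family $(g_{0n})_{n\ge1}\subseteq G$ chosen at each step. First, I observe that iterating separating shows that for any finite $Y\subseteq X$ and any $y_0\in X\setminus Y$, the orbit of $y_0$ under $\mathrm{Fix}(Y)$ is infinite: a single separating step gives $g\in\mathrm{Fix}(Y)$ with $y_0g\ne y_0$, and necessarily $y_0g\notin Y$ by injectivity (otherwise $g$ fixes $y_0g$, forcing $y_0g^2=y_0g$ and hence $y_0g=y_0$); iterating separating on $Y\cup\{y_0g\}$, $Y\cup\{y_0g,y_0g^2\}$, and so on produces new orbit points indefinitely.

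Fix $x_0\in X$. At inductive stage $n\ge1$, having $(x_0,\dots,x_{n-1})$ and $g_{01},\dots,g_{0,n-1}$ with the compositional family rectifiable on the current sequence, I will identify a finite ``bad set'' $B_n\subseteq X\setminus\{x_0\}$ and a finite set $A_n$ of forbidden values, and apply separating to $Y:=\{x_1,\dots,x_{n-1}\}\cup B_n$ with base point $x_0$ to obtain $h\in\mathrm{Fix}(Y)$ satisfying $x_0h\notin A_n\cup\{x_0\}$; the preliminary observation lets us avoid any finite set. To further ensure $h$ is not an involution on $x_0$ (i.e.\ $x_0h^2\ne x_0$), if a first choice $h_1$ fails, I will take $h_2\in\mathrm{Fix}(Y\cup\{x_0h_1\})$ from one more separating step and set $h:=h_1h_2$; then $x_0h=x_0h_1$ while $x_0h^2=x_0h_2\ne x_0$. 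Setting $g_{0n}:=h$ and $x_n:=x_0h$ completes the step.

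The main obstacle will be specifying $B_n$ and $A_n$ so that the enlarged compositional family is rectifiable. My plan is a case analysis on potential violations $x_kg_{ij}=x_\ell$ in the extended sequence: each reduces to one of three types, namely (i) $x_n$ takes a specific forbidden value (added to $A_n$); (ii) $h$ sends some specific point $p\ne x_0$ to a specific distinct $p'$ (prevented by placing $p\in B_n$, so $h$ fixes $p$ and hence $ph=p\ne p'$); or (iii) $x_0h^2=x_i$ for some $i<n$, which for $i\ge1$ is ruled out by the same injectivity argument as the preliminary observation (since $x_0h\notin Y\supseteq\{x_1,\dots,x_{n-1}\}$ forces $x_0h^2\notin\{x_1,\dots,x_{n-1}\}$ as well), and for $i=0$ is ruled out by the non-involutive construction of $h$. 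Since each of $A_n$, $B_n$ is finite and computable from earlier data, and all conditions can be met simultaneously thanks to orbit infiniteness, the induction goes through and produces the desired rectifiable sequence.
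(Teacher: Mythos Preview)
Your inductive scheme is close in spirit to the paper's (which also builds a sequence $(g_i)$ step by step, requiring each $g_i$ to fix a finite set and move the basepoint), but your case analysis has a genuine gap: the three types~(i)--(iii) do not exhaust the possible violations.

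Concretely, consider the potential violation $x_k g_{ij}=x_\ell$ with $j=k=n$ and $\ell=0$, $1\le i<n$. This reads $x_n g_{in}=x_0$, i.e.\ $x_0 h g_{0i}^{-1} h = x_0$. Here $h$ acts twice: first at $x_0$ (producing $x_n$), then at the point $q:=x_n g_{0i}^{-1}$, which depends on $h$. This is not of type~(i) (it is not merely a forbidden value for $x_n$), nor of type~(ii) (the point $q$ is not ``specific'': it is not computable from earlier data, since $x_n=x_0h$), nor of type~(iii) (the equation is not $x_0h^2=x_i$; those two equations only coincide when $h$ and $g_{0i}$ commute, which you do not arrange). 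To see the gap is real, take $G=\operatorname{Sym}(X)$ with $X$ infinite and $n=2$: pick $x_2\notin\{x_0,x_1\}\cup A_2$ with $q:=x_2 g_{01}^{-1}\notin\{x_0,x_1,x_2\}\cup B_2$, and let $h$ be the $3$-cycle $(q,x_0,x_2)$. Then $h$ fixes $\{x_1\}\cup B_2$, sends $x_0$ to $x_2\notin A_2$, and satisfies $x_0h^2=q\ne x_0$; yet $x_2 g_{12}=q h=x_0$, so the witness $g_{12}$ fails the rectifiability condition (with $k=2\notin\{1,0\}=\{i,\ell\}$).

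The paper avoids this by aiming for the cleaner \emph{parallelogram-free} condition and, crucially, by making the new $g_i$ fix not just the previous points $\{x_j\}$ and an ad hoc set $B_n$, but the whole set $X_i^3$ of length-$\le 3$ words in the earlier $g_j$'s applied to the basepoint. That richer fixed set is exactly what absorbs the ``double occurrence'' of the new index in the verification. Your argument can be repaired along the same lines, but as written the inductive step does not close.
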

\begin{proof}
  We choose an arbitrary point $z\in X$, and construct iteratively a
  parallelogram-free sequence $(g_i)$ at $z$; by
  Lemma~\ref{lem:parallelogram}, this proves the lemma.  Suppose that
  we have already constructed $g_j$ for all $j<i$. For $i\ge 0$, we
  then construct $g_i$ in the following way. We define
  \[X_i^r:=\{z g_{j_1}\cdots g_{j_s} : s\le r,\,0\le j_1,\dots,j_s<i\}\setminus\{z\}\qquad\text{for }r=1,2,3,
  \]
  and choose an element $g_i\in\operatorname{Fix}(X_i^3)$ that moves
  $z$. In particular, $z g_i^{-1}\not\in X_i^3\cup\{z\}$.

  Let us suppose that we have $z g_k^{-1} g_i g_j^{-1} g_\ell = z$
  with $i\neq j\neq k\neq\ell\neq i$, and seek a contradiction. If
  needed, we switch $i\leftrightarrow j$ and $k\leftrightarrow\ell$
  and consider the equivalent equality $z g_\ell^{-1} g_j g_i^{-1} g_k
  = z$, to reduce to the case $k<\ell$.

  We note, first, $u := z g_k^{-1}\in X_\ell^1$, because
  $k<\ell$. Next, we consider $v := z g_k^{-1} g_i = u g_i$ and claim
  $v\in X_\ell^2$. By assumption $i\neq k$ so $v\neq z$; if $i<\ell$
  then $v\in X_\ell^2$ by definition of $X_\ell^2$, while if
  $i\ge\ell$ then $i>k$ so $u\in X_i^1$ and $v=u g_i=u\in
  X_\ell^1\subseteq X_\ell^2$. Finally, we consider $w = z g_k^{-1}
  g_i g_j^{-1} = v g_j^{-1}$. By assumption $j\ne\ell$; if $j<\ell$
  then $w\in X_\ell^3\cup\{z\}$ by definition of $X_\ell^3$, while if
  $j>\ell$ then $v\in X_j^2$ so $w=v g_j^{-1}=v\in X_\ell^2\subseteq
  X_\ell^3$. We have in both cases reached the contradiction
  $w = z g_\ell^{-1}\in X_\ell^3\cup\{z\}$.
\end{proof}

We quote from Ab\'ert~(\cite{abert:nonfree}*{Proof of Corollary~1.4},
see also~\cite{bartholdi-erschler:orderongroups}*{Lemma~6.11}) that
the action of weakly branch groups separates the boundary of their
tree.  Since the first Grigorchuk group is weakly branched
(see~\cite{grigorchuk:onbranch}*{Theorem~1}
or~\cite{bartholdi-g-s:bg}*{Proposition~1.25}), it provides by
Lemma~\ref{lem:separating=>rectifiable} an example of a group action
with rectifiable sequences. We also see it directly in the following
section.

\subsection{An orbit for the first Grigorchuk group}\label{ss:grigorchuk}
In this subsection, we consider the first Grigorchuk group
$\grig=\langle a,b,c,d\rangle$. Recall that it acts on set of infinite
sequences $\{\mathbf0,\mathbf1\}^\infty$ over a two-letter alphabet,
which is naturally the boundary of a binary rooted tree; the action
may be found in~\cite{grigorchuk:growth} and
in~\cite{bartholdi-g-s:bg}*{\S1.6.1}. We denote by
$X=\mathbf1^\infty\grig$ the orbit of the rightmost ray, and view it
as a graph with vertex set $X$ and for each $x\in X$ and each
generator $s$ of $\grig$ an edge labeled $s$ from $x$ to $xs$; such
graphs are called \emph{Schreier} graphs. We construct explicitly a
spreading, locally stabilizing, rectifiable sequence for the action of
$\grig$ on $X$: for all $i\in\N$, let us define
\[x_i=\mathbf0^i\mathbf1^\infty.
\]
The geometric image of the Schreier graph $X$ is that of a
half-infinite line. The point $x_i$ is at position $2^i$ along this
ray.

\begin{lemma}
  For all $i,j\in\N$,
  \begin{enumerate}
  \item the marked balls of radius $2^{\min(i,j)}$ in $X$ around $x_j$
    and $x_i$ coincide;
  \item the distance $d(x_i,x_j)$ is $|2^i-2^j|$;
  \item there exists $g_{i,j}\in\grig$ of length $|2^i-2^j|$ with $x_i
    g_{i,j} = x_j$ and $x_k g_{i,j}\neq x_\ell$ for all $(k,\ell)\neq
    (i,j)$.
  \end{enumerate}
\end{lemma}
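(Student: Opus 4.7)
The plan is to prove all three assertions simultaneously by analysing the Schreier graph $X$ through the self-similar structure and contraction property of $\grig$. Recall that every $g\in\grig$ decomposes as $g=(g_0,g_1)\sigma$ into a root permutation $\sigma$ and two sections $g_0,g_1$ acting on the $\mathbf0$- and $\mathbf1$-subtrees; by Grigorchuk's contraction estimate, an element of word length $\le 2^k$ has all its depth-$k$ sections of length bounded by a constant independent of $k$. Consequently the action of a short word depends only on a bounded-depth prefix of its argument, and this localisation controls the whole proof.

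For part~(3), and in turn the upper bound in part~(2), I would without loss of generality take $i<j$ and construct $g_{i,j}$ inductively, starting from elements $\gamma_n\in\grig$ of length $2^n-1$ that take $x_0$ to $x_n$ along consecutive vertices of the Schreier line. The element $\gamma_n$ is built recursively from $\gamma_1=a$ using the self-similar recursions $b=(a,c)$, $c=(a,d)$, $d=(1,b)$, and a standard induction yields $|\gamma_n|=2^n-1$. Then $g_{i,j}$ is the element of $\grig$ whose only non-trivial depth-$i$ section sits in the $\mathbf0^i$-subtree and equals $\gamma_{j-i}$ there; its word length comes out to $2^j-2^i$ by the section-length formula applied inductively to the outer $i$ levels. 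The separation condition $x_kg_{i,j}\ne x_\ell$ for $(k,\ell)\ne(i,j)$ then holds in two cases: $g_{i,j}$ lies in the stabiliser of the prefix $\mathbf0^i$, so it fixes every $x_k$ with $k<i$, while for $k\ge i$ the action of $\gamma_{j-i}$ on the ray $\mathbf0^{k-i}\mathbf1^\infty$ produces a ray of the form $\mathbf0^{\ell-i}\mathbf1^\infty$ only in the single case $(k-i,\ell-i)=(0,j-i)$.

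Parts~(1) and~(2) follow from the same contraction analysis. Any path of length $\le 2^{\min(i,j)}$ starting at $x_i$ is traced by a word whose sections at depth $\min(i,j)$ have length $\le O(1)$; such an action depends only on the first $\min(i,j)+O(1)$ bits of its argument, and since $x_i$ and $x_j$ share the tail $\mathbf1^\infty$ there is a canonical label-preserving isomorphism between the marked balls of radius $2^{\min(i,j)}$ around $x_i$ and $x_j$, proving~(1). For~(2), the upper bound is immediate from~(3), and the matching lower bound comes from passing to the level-$n$ Schreier graph for $n=\lceil\log_2|2^i-2^j|\rceil+O(1)$, which is a finite line in which $x_i$ and $x_j$ project to vertices at positions $2^i$ and $2^j$; since distances can only decrease under the quotient $X\to X^{(n)}$, we get $d(x_i,x_j)\ge|2^i-2^j|$. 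The main obstacle is the full verification of the separation property in~(3)—tracking the image of \emph{every} $x_k$, not merely of $x_i$, under $g_{i,j}$—which will require a careful inductive bookkeeping of the self-similar recursion at every level between the root and depth $j$.
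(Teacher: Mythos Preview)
The decisive gap is in part~(3): you have misread the separation requirement. The condition $x_k g_{i,j}\neq x_\ell$ for all $(k,\ell)\neq(i,j)$ includes the diagonal $k=\ell$, so when $i\neq j$ it forbids $g_{i,j}$ from fixing \emph{any} $x_k$. Your element, having trivial depth-$i$ sections everywhere except at the address $\mathbf0^i$, fixes every $x_k$ with $k<i$, since those rays lie outside the $\mathbf0^i$-subtree; this is a violation of the condition, not a verification of it. The paper's proof of~(3) proceeds quite differently and is not an explicit construction: it takes an arbitrary $g_{i,j}$ with $x_i g_{i,j}=x_j$, argues that after a preliminary multiplication only finitely many bad pairs $(k,\ell)$ survive, and then destroys each of them by multiplying with an element $h_{\mathbf0^\ell\mathbf1}\in\grig$ whose fixed-point set is exactly the complement of the cylinder $\mathbf0^\ell\mathbf1\{\mathbf0,\mathbf1\}^\infty$. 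A secondary issue with your construction: it is not automatic that $\grig$ contains an element whose sole nontrivial depth-$i$ section is a prescribed $\gamma_{j-i}$ (the level-$i$ stabiliser is a proper subgroup of $\grig^{2^i}$), nor that such an element, if it exists, has word length exactly $2^j-2^i$.

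Your approach to~(1) also has a gap. At depth $\min(i,j)$ the tails of $x_i$ and $x_j$ are \emph{different}---one is $\mathbf1^\infty$, the other $\mathbf0^{|i-j|}\mathbf1^\infty$---so a bounded-length section at that level can act differently on them, and the claim that the action ``depends only on the first $\min(i,j)+O(1)$ bits'' does not follow from contraction alone. The paper instead exhibits an explicit word substitution $\sigma\colon a\mapsto c$, $b\mapsto d^a$, $c\mapsto b^a$, $d\mapsto c^a$ that implements the shift $x\mapsto\mathbf0x$ on $X$; since $\sigma$ is $2$-Lipschitz on even-length words, iterating it compares the ball around $x_n$ with that around $x_{n+1}$ directly and yields both~(1) and~(2). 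Your lower bound for~(2) via projection to a finite-level Schreier graph is a valid idea.
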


\begin{proof}
  (1,2) Consider the map $\sigma: a\mapsto c,b\mapsto d^a,c\mapsto
  b^a,d\mapsto c^a$. It defines a self-map of $X$ by sending
  $\mathbf1^\infty g$ to $\mathbf1^\infty\sigma(g)$. A direct
  calculation shows that it sends $x\in X$ to $\mathbf0x$.

  Since $\sigma$ is $2$-Lipschitz on words of even length in
  $\{a,b,c,d\}$, it maps the ball of radius $n$ around $x$ to the ball
  of radius $2n$ around $\mathbf0x$. Its image is in fact a net in the
  ball of radius $2n$: two points at distance $1$ in the ball of
  radius $n$ around $x$ will be mapped to points at distance $1$ or
  $3$ in the image, connected either by a path $a$ or by a segment
  $a-b-a$, $a-c-a$ or $a-d-a$. In particular, the $2^n$-neighbourhoods
  of the balls about the $x_m$ coincide for all $m\ge n$.

  (3) Note, first, that there exists $g_{i,j}$ with $x_i g_{i,j} =
  x_j$, because the rays ending in $\mathbf1^\infty$ form a single
  orbit. Note, also, that we have $x_k g_{i,j} = x_\ell$ for either
  finitely many $(k,\ell)\neq(i,j)$ or for all but finitely many
  $(k,\ell)$, because there is a level $N$ at which the decomposition
  of $g_{i,j}$ consists entirely of generators; if the entry at
  $\mathbf0^N$ of $g_{i,j}$ is trivial or `$d$' then all but finitely
  many of the $x_k$ are fixed; while otherwise (up to increasing $N$
  by at most one) we may assume it is an `$a$'; then $\mathbf0^{N+1}
  g_{i,j}=\mathbf0^N\mathbf1$, so $x_k\neq x_\ell$ for all $k>N+1$.

  We use the following property of the Grigorchuk group: for every
  finite sequence $u\in\{\mathbf0,\mathbf1\}^*$ there exists an
  element $h_u\in\grig$ whose fixed points are precisely those
  sequences in $\{\mathbf0,\mathbf1\}^\infty$ that do not start with
  $u$; see~\cite{bartholdi-g-s:bg}*{Proposition~1.25}.

  If the entry at $\mathbf0^N$ of $g_{i,j}$ is trivial, then we
  multiply $g_{i,j}$ with $h_{\mathbf0^M}$ for some $M>\max(N,i)$, so
  as to fall back to the second case.

  Then, for each pair $(k,\ell)\neq(i,j)$ with $x_k g_{i,j}=x_\ell$,
  we multiply $g_{i,j}$ with $h_{\mathbf0^\ell\mathbf1}$, so as to
  destroy the relation $x_k g_{i,j}=x_\ell$.

  The resulting element $g_{i,j}$ satisfies the required conditions.
\end{proof}

\section{Subexponential growth of wreath products}

In this section, we show how some permutational wreath products have
subexponential growth.

\begin{definition}
  The group $G$ acting on $X$ has the \emph{subexponential wreathing
    property} if for any finitely generated group of subexponential
  growth $H$ the restricted wreath product $H\wr_X G$ has
  subexponential growth.
\end{definition}

\begin{lemma}\label{lem:concavehull}
  Let $f$ be a positive sublinear function, namely $f(n)/n\to 0$ as
  $n\to\infty$. Then $f$ is bounded from above by a concave sublinear
  function.
\end{lemma}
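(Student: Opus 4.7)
The plan is to define $\hat f$ as the \emph{upper concave envelope} of $f$, namely the pointwise infimum of all affine functions that dominate $f$:
\[
\hat f(n)=\inf\bigl\{\alpha n+\beta\colon \alpha\ge 0,\ \beta\in\R,\ \alpha m+\beta\ge f(m)\text{ for all }m\bigr\}.
\]
Then $\hat f\ge f$ by construction, and $\hat f$ is concave as a pointwise infimum of affine (hence concave) functions. The lemma will follow once I show that the infimum is finite and that $\hat f$ is sublinear.

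Both points follow from the same estimate. Fix $\varepsilon>0$; by sublinearity there exists $N$ with $f(m)\le\varepsilon m$ for all $m\ge N$. On the finite set $\{0,1,\dots,N\}$ the positive function $f$ is bounded by some constant $M_\varepsilon$, so the affine function $\ell_\varepsilon(n):=\varepsilon n+M_\varepsilon$ dominates $f$ everywhere. This shows that the set over which the infimum is taken is non-empty, so $\hat f(n)\le\ell_\varepsilon(n)<\infty$ for every $n$.

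Applying the same inequality, $\hat f(n)/n\le\varepsilon+M_\varepsilon/n$, so $\limsup_{n\to\infty}\hat f(n)/n\le\varepsilon$. Since $\varepsilon>0$ is arbitrary, $\hat f(n)/n\to 0$, i.e.\ $\hat f$ is sublinear, and evidently $\hat f\ge f$. This gives the desired concave sublinear majorant.

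There is no real obstacle here: once the upper concave envelope is the candidate, the only thing to verify is that the set of affine majorants is non-empty, which is precisely the content of sublinearity. If one prefers an elementary discrete construction (when $f$ is defined on $\N$), one can equivalently take $\hat f$ to be the piecewise-linear function whose graph is the upper boundary of the convex hull of the set $\{(m,y):y\le f(m)\}\subseteq\R^2$; positivity of $f$ and the bound $f(m)\le\varepsilon m+M_\varepsilon$ ensure that this upper boundary is a finite, non-increasing sequence of slopes tending to $0$, giving a concave sublinear function $\ge f$.
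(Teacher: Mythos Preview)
Your proof is correct and takes essentially the same approach as the paper: both construct the upper concave envelope of $f$. The paper builds it geometrically, picking for each slope $\theta\in(0,1)$ a point $n_\theta$ where $f(n)-\theta n$ is maximal and then linearly interpolating between these extreme points, whereas you use the dual description as the infimum of affine majorants; you even anticipate the paper's piecewise-linear picture in your closing paragraph, so the two arguments are really the same construction viewed from opposite sides.
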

\begin{proof}
  For every $\theta\in(0,1)$, let $n_\theta$ be such that $f(n)-\theta
  n$ is maximal. Given $n\in\R$, let $\zeta<\theta$ be such that
  $n\in[n_\theta,n_\zeta]$ with maximal $\zeta$ and minimal $\theta$,
  and define $\overline f(n)$ on $[n_\theta,n_\zeta]$ by linear
  interpolation between $(n_\theta,f(n_\theta))$ and
  $(n_\zeta,f(n_\zeta))$. Clearly $\overline f\ge f$, and $\overline
  f(n)/n$ is decreasing and coincides infinitely often with $f(n)/n$,
  so it converges to $0$.
\end{proof}

\begin{lemma}\label{lem:swp}
  Let the Schreier graph of $X$ have linear growth, and assume that
  $G$ has sublinear inverted orbit growth on $X$.  Assume also that
  $G$ has subexponential growth.  Then $G$ has the subexponential
  wreathing property.
\end{lemma}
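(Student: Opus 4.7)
The plan is to bound the growth function $\beta_W$ of $W := H \wr_X G$, where $H$ is an arbitrary finitely generated group of subexponential growth with symmetric generating set $S_H$. I generate $W$ by $S$ together with the elements $(f_h,1_G)$ of the restricted wreath product whose $f_h$ is supported at a fixed basepoint $x_0 \in X$ and takes value $h \in S_H$ there. I would then count elements $(f,g)$ of word-length $\le n$ in $W$ in three pieces. First, the projection to $G$ has length $\le n$, giving at most $\beta_G(n)$ choices for $g$. Second, by the sublinear inverted orbit growth hypothesis the support of $f$ has cardinality $k \le \ell(n)$ with $\ell(n)=o(n)$, and by linear growth of the Schreier graph this support sits inside a set of at most $Cn$ points, giving at most $\binom{Cn}{k} \le (eCn/k)^{k}$ choices. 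Third, the values of $f$ on its support have $S_H$-word lengths $n_1,\dots,n_k$ summing to at most $n$.

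The crucial step is the value contribution. By Lemma~\ref{lem:concavehull}, I pick a concave sublinear majorant $\psi_H$ of $\log\beta_H$. Jensen's inequality then yields $\sum \psi_H(n_i) \le k\,\psi_H(n/k)$, so $\prod \beta_H(n_i) \le \exp\bigl(k\,\psi_H(n/k)\bigr)$. Summing over the $\binom{n+k}{k} \le (e(n+k)/k)^{k}$ compositions of some integer $\le n$ into $k$ non-negative parts, and combining with the previous estimates, gives
\[
\beta_W(n) \;\le\; (n+1)\,\beta_G(n)\, \max_{k\le \ell(n)}\, \left(\tfrac{eCn}{k}\right)^{k}\!\left(\tfrac{e(n+k)}{k}\right)^{k} \exp\bigl(k\,\psi_H(n/k)\bigr).
\]

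Taking logarithms, dividing by $n$ and writing $r=k/n \in [0,\ell(n)/n]$, every term on the right vanishes as $n\to\infty$: $\log\beta_G(n)/n \to 0$ by subexponentiality of $G$; the support and composition contributions are $O\bigl(r\log(1/r)\bigr)$, which vanishes since $r\le \ell(n)/n \to 0$ and $s\mapsto s\log(1/s)$ is continuous with limit $0$ at $0$; and the value contribution equals $\psi_H(m)/m$ with $m=n/k \ge n/\ell(n)\to\infty$, which vanishes by sublinearity of $\psi_H$. Hence $\log\beta_W(n)/n \to 0$ and $W$ has subexponential growth, as required.

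The main obstacle is the value-counting step: without a concave majorant $\psi_H$, Jensen's inequality is unavailable and one obtains only the trivial bound $\prod \beta_H(n_i) \le \beta_H(n)^{k}$, which is too weak to give subexponential growth of $W$ --- this is precisely why Lemma~\ref{lem:concavehull} is invoked. A secondary subtlety is that Stirling-type estimates $\binom{N}{k}\le (eN/k)^{k}$ must be used rather than the cruder $N^{k}$, since the vanishing of the support and composition terms relies on $r\log(1/r)\to 0$ as $r\to 0$, whereas $r\log n$ need not tend to $0$ for a merely sublinear $\ell$.
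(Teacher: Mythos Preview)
Your proof is correct and follows essentially the same approach as the paper's: the same generating set for $H\wr_X G$, the same decomposition of an element $(f,g)$ into the $G$-component, the support of $f$, and the values of $f$, and the same use of Lemma~\ref{lem:concavehull} combined with Jensen (equivalently, log-concavity of a majorant of $v_H$) to bound the product $\prod \beta_H(n_i)$ by $\exp(k\,\psi_H(n/k))$. The paper arrives directly at the bound $v_{H\wr_X G}(R)\le v_G(R)\binom{CR}{\rho(R)}\overline v_H(R/\rho(R))^{\rho(R)}$ and simply asserts that a product of subexponential functions is subexponential; you supply more bookkeeping (the composition count $\binom{n+k}{k}$, the explicit Stirling bound, the maximisation over $k$, and the verification that each term is $o(n)$), so your argument is in fact slightly more complete than the paper's sketch.
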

\begin{proof}
  We essentially
  follow~\cite{bartholdi-erschler:permutational}*{Lemma~5.1}.

  Fix some $x_0\in X$ and let $\rho(n)$ be the growth of inverted
  orbits starting from $x_0$. By assumption, $\rho(n)/n\to0$, and
  there is a constant $C$ such that the ball of radius $n$ around
  $x_0$ has cardinality $\le Cn$.

  Let $H$ be a group of subexponential growth, and choose a finite
  generating set for $H$. By Lemma~\ref{lem:concavehull}, there exists
  a log-concave subexponential function $\overline v_H$ bounding the
  growth function $v_H(n)$ of $H$.

  We view $H\wr_X G$ as generated by the generating set of $G$ and the
  imbedding of the generating set of $H$ as functions supported at
  $\{x_0\}$.

  Consider an element $(c,g)\in H\wr_X G$ of norm $R$. The function
  $c\colon X\to H$ has support of cardinality $k\le\rho(R)$, and this
  support is contained in the ball of radius $R$ around $x_0$. Since
  the ball of radius $R$ has cardinality at most $CR$, the number of
  possible choices for this support is at most $\binom{CR}{\rho(R)}$.
  Let $\{z_1,\dots,z_k\}$ denote the support of $c$. The values of $c$
  belong to $H$, and their total norm is $\le R$, so the number of
  choices for $c$ is at most $v_H(n_1)\cdots v_H(n_k)$ subject to the
  constraint $n_1+\cdots+n_k\le R$. Since $v_H(n_i)\le\overline
  v_H(n_i)$ and $\overline v_H(n_i)$ is log-concave, the number of
  choices for $v_H$ is at most $\overline v_H(R/k)^k$. On the other
  hand, the number of choices for $g$ is at most $v_G(R)$. All in all,
  the cardinality of the ball of radius $R$ in $H\wr_X G$ is bounded
  from above as
  \[v_{H\wr_X G}(R)\le v_G(R) \binom{CR}{\rho(R)} \overline
  v_H\Big(\frac{R}{\rho(R)}\Big)^{\rho(R)}.
  \]
  Since it is a product of subexponential functions, it is itself
  subexponential.
\end{proof}

We now
quote~\cite{bartholdi-erschler:permutational}*{Proposition~4.4}: the
inverted orbit growth of the first Grigorchuk group $\grig$ on
$X=\mathbf1^\infty\grig$ is sublinear (actually of the form $n^\alpha$
for some $\alpha<1$); therefore, by Lemma~\ref{lem:swp}, the action of
$\grig$ on $X$ has the subexponential wreathing property.  (It follows
from~\cite{bartholdi-erschler:givengrowth} that all Grigorchuk groups
$G_\omega$ also have the subexponential wreathing property, as soon as
$\omega\in\{0,1,2\}^\infty$ contains infinitely many copies of each
symbol.)

\section{The construction of $W$}\label{sec:W}

Using the results of the previous section, we select a group $G$
acting on a set $X$, and a rectifiable, spreading, locally stabilizing
sequence $(x_i)$ of elements of $X$.

Let $(b_1,b_2,\dots)$ be a sequence in $B$. We will specify later a
rapidly increasing sequence $0\le n(1) < n(2) <\dots$; assuming this
sequence given, we define $f\colon X\to B$ by
\[f(x_{n(1)})=b_1,\qquad f(x_{n(2)})=b_2,\qquad \dots,\qquad f(x)=1\text{ for other }x.
\]
We then consider the subgroup $W=\langle G,f\rangle$ of the
unrestricted wreath product $B^X\rtimes G$.

\begin{lemma}\label{lem:contains [B,B]}
  Denote by $B_0$ the subgroup of $B$ generated by
  $\{b_1,b_2,\dots\}$. If the sequence $(x_i)$ is rectifiable, then $[W,W]$
  contains $[B_0,B_0]$ as a subgroup.
\end{lemma}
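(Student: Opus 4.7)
My plan is to embed $[B_0,B_0]$ into $[W,W]$ via the natural map $c\mapsto\delta_{x_*,c}$, where $\delta_{x_*,c}\in B^X$ denotes the function taking value $c$ at a fixed point $x_*:=x_{n(i^*)}$ and $1$ elsewhere. This is plainly an injective homomorphism of $B$ into $B^X\subseteq B^X\rtimes G$, so it suffices to show that $\delta_{x_*,c}\in[W,W]$ for every $c\in[B_0,B_0]$.

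For each pair $i\ne j$, rectifiability produces $g_{i,j}\in G$ with $x_{n(i)}g_{i,j}=x_{n(j)}$ and $x_m g_{i,j}\ne x_{m'}$ whenever $m\notin\{n(i),m'\}$. The commutator $[f,{}^{g_{i,j}}f]\in[W,W]$ has trivial $G$-component and is pointwise $x\mapsto[f(x),f(xg_{i,j})]$; this vanishes unless $x=x_{n(k)}$ and $xg_{i,j}=x_{n(\ell)}$ for some $k,\ell$, in which case rectifiability forces either $(k,\ell)=(i,j)$, contributing the value $[b_i,b_j]$ at $x_{n(i)}$, or $k=\ell$ (a fixed point of $g_{i,j}$), contributing the trivial $[b_k,b_k]=1$. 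Hence $[f,{}^{g_{i,j}}f]=\delta_{x_{n(i)},[b_i,b_j]}$, which lies in $[W,W]$.

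To promote these base commutators to arbitrary $B_0$-conjugates, I observe that ${}^{g_{k,i}^{-1}}f\in W$ evaluates at $x_{n(i)}$ to $f(x_{n(i)}g_{k,i}^{-1})=f(x_{n(k)})=b_k$; since $[W,W]\triangleleft W$, conjugating $\delta_{x_{n(i)},c}\in[W,W]$ by this element (or its inverse) yields $\delta_{x_{n(i)},b_k^{\pm 1}cb_k^{\mp 1}}\in[W,W]$ (the stray values of ${}^{g_{k,i}^{-1}}f$ at other points are immaterial, since $\delta_{x_{n(i)},c}$ is trivial there), and products realise conjugation by an arbitrary $w\in B_0$. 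Conjugating next by $g_{i^*,i}\in G\subseteq W$ transports these singletons from $x_{n(i)}$ to $x_*$ without changing their values. Taking pointwise products at $x_*$ then exhausts $\{\delta_{x_*,c}:c\in N\}$, where $N$ is the subgroup of $B_0$ generated by $\{w[b_i,b_j]w^{-1}:i\ne j,\,w\in B_0\}$; this $N$ is the normal closure in $B_0$ of the commutators of its generators, hence coincides with $[B_0,B_0]$.

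The delicate step is the commutator computation in the second paragraph: it uses the full strength of rectifiability (not merely transitivity on the sequence) to rule out spurious contributions and yield a singleton-supported result. Once this is secured, the remaining arguments are routine pointwise calculations in $B^X$ together with the normality of $[W,W]$ in $W$.
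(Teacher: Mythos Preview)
Your argument is correct and follows essentially the same route as the paper's proof: compute a commutator of two $G$-translates of $f$ using rectifiability to obtain a singleton-supported function with value $[b_i,b_j]$, then conjugate by translates of $f$ (which act on a singleton only through their value at that point) to realise all $B_0$-conjugates, and conclude via the normal closure description of the derived subgroup. The only cosmetic difference is that the paper anchors its singletons at a fixed base point $x_0$ and uses two moving elements $g_i,g_j$ to form $[f^{g_i},f^{g_j}]$ directly at $x_0$, whereas you use a single $g_{i,j}$ to form $[f,{}^{g_{i,j}}f]$ at $x_{n(i)}$ and then transport to $x_*$; these are equivalent up to a conjugation.
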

\begin{proof}
  Without loss of generality and to lighten notation, we rename $B_0$
  into $B$. We also denote by $\iota\colon B\to B^X\rtimes G$ the
  imbedding of $B$ mapping the element $b\in B$ to the function $X\to
  B$ with value $b$ at $x_0$ and $1$ elsewhere.  We shall show that
  $[W,W]$ contains $\iota([B,B])$. For this, denote by $H$ the
  subgroup $\iota(B)\cap W$.

  We first consider an elementary commutator $g=[b_i,b_j]$. Let
  $g_i,g_j\in G$ respectively map $x_i,x_j$ to $x_0$, and be such that
  $g_i g_j^{-1}$ maps no $x_k$ to $x_\ell$ with $k\neq\ell$, except
  for $x_i g_i g_j^{-1}=x_j$. Consider $[f^{g_i},f^{g_j}]\in [W,W]$;
  it belongs to $B^X$, and has value $[b_i,b_j]$ at $x_0$ and is
  trivial elsewhere, so equals $\iota(g)$ and therefore $\iota(g)\in
  H$.

  We next show that $H$ is normal in $B^X$. For this, consider $h\in
  H$. It suffices to show that $h^{\iota(b_i)}$ belongs to $H$ for all
  $i$. Now $h^{\iota(b_i)}=h^{f^{g_i}}$ belongs to $H$, and we are
  done.
\end{proof}

\begin{proposition}\label{prop:Wconvergence}
  Let $G$ be a group acting on $X$. Let the sequence $(x_i)$ in $X$ be
  spreading and locally stabilizing. Let a sequence of elements
  $(b_i)$ be given in the group $B$, all of the same order
  $\in\N\cup\{\infty\}$.

  Then for every increasing sequence $(m(i))$ there is a choice of
  increasing sequence $(n(i))$ with the following property.

  For all $i\in\N$, let $f_i$ be the finitely supported function $X\to
  B$ with $f_i(x_{n(j)})=b_j$ for all $j\le i$, all other values being
  trivial, and denote by $W_i$ the group $\langle f_i,G\rangle$.  Let
  also $f\colon X\to B$ be defined by $f(x_{n(j)})=b_j$ for all
  $j\in\N$, all other values being trivial, and write $W=\langle
  G,f\rangle$.

  Then the ball of radius $m(i)$ in $W$ coincides with the ball of
  radius $m(i)$ in $W_i$, via the identification $f\leftrightarrow
  f_i$.

  Furthermore, the term $n(i)$ depends only on the previous terms
  $n(1),\dots,n(i-1)$, on the initial terms $m(1),\dots,m(i-1)$, and
  on the ball of radius $m(i)$ in the subgroup $\langle
  b_1,\dots,b_{i-1}\rangle$ of $B$.
\end{proposition}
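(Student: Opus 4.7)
The plan is to construct $(n(i))$ by recursion on $i$. At step $i$, given $n(1),\dots,n(i-1)$, I would choose $n(i)>n(i-1)$ large enough that (a) $d(x_{n(i)},x_{n(k)})>4m(i)$ for every $k<i$, and (b) the marked ball of radius $2m(i)$ in $X$ around $x_{n(i)}$ agrees, as a labelled graph, with the balls of radius $2m(i)$ around $x_p$ for every $p$ beyond some threshold depending only on $m(i)$. Spreading makes (a) compatible with the finitely many constraints coming from $k<i$, and local stabilisation furnishes the threshold in (b); the resulting $n(i)$ depends only on $n(1),\dots,n(i-1)$, on $m(i)$, and on fixed data of the $G$-action on $X$, which lies comfortably within the stated dependency bound.

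To verify the ball claim I would reduce it to the following: for every word $v$ of length at most $2m(i)$ in the alphabet $S\cup\{f^{\pm1}\}$, one has $v(f)=1$ in $B^X\rtimes G$ if and only if $v(f_i)=1$. Writing $v(f)=(F_v,G_v)$ and $v(f_i)=(F_{v,i},G_v)$ with identical $G$-components, the problem becomes $F_v\equiv 1\iff F_{v,i}\equiv 1$ as functions $X\to B$.

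At each $x\in X$, both $F_v(x)$ and $F_{v,i}(x)$ are products in $B$ of factors $b_j^{\epsilon_k}$, one per $f$-letter index $k$ of $v$ with $x\bar g_{k+1}=x_{n(j)}$, and the two differ exactly by the factors with $j>i$. Let $N(x)=\{j:d(x,x_{n(j)})\le 2m(i)\}$; only indices $j\in N(x)$ can contribute at $x$. Condition (a) at step $i$ together with analogous conditions at every later step entails that if $|N(x)|\ge 2$ then $N(x)\subseteq\{1,\dots,i-1\}$; equivalently, the cluster around $x_{n(i)}$ and the cluster around each $x_{n(j)}$ with $j>i$ is \emph{pure}, involving only the single element $b_\ell$ at its base point. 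Whenever $N(x)\subseteq\{1,\dots,i-1\}$, or $N(x)=\{j\}$ with $j\le i$, or $N(x)=\emptyset$, the factors with $j>i$ are absent and $F_v(x)=F_{v,i}(x)$, so the biconditional holds trivially at $x$; the only remaining case is $N(x)=\{j\}$ with $j>i$, where $F_{v,i}(x)=1$ automatically and $F_v(x)=b_j^{\sigma_j(x)}$ for an integer exponent $\sigma_j(x)$.

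To close this last case, condition (b) provides a label-preserving isomorphism $\phi$ between the $2m(i)$-balls around $x_{n(i)}$ and $x_{n(j)}$, under which $\sigma_j(x)=\sigma_i(\phi^{-1}(x))$; the pure cluster at $x_{n(i)}$ gives $F_{v,i}(\phi^{-1}(x))=b_i^{\sigma_i(\phi^{-1}(x))}$, so $F_{v,i}\equiv 1$ forces $b_i^{\sigma_i(\phi^{-1}(x))}=1$, and the common-order hypothesis on the $b_\ell$ then yields $b_j^{\sigma_j(x)}=1$, hence $F_v(x)=1$. I expect the main technical obstacle to be the careful accounting of which pairs $(x_{n(p)},x_{n(q)})$ must be far apart: overlaps among indices strictly below $i$ are harmless because they appear identically in $F_v$ and $F_{v,i}$, while any overlap involving an index $\ge i$ would break the purity needed in the last step, and this is exactly what the inductive choice of $n(i)$ prevents.
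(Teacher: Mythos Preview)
Your proposal is correct and follows essentially the same approach as the paper's own proof: choose $n(i)$ large enough, using spreading and local stabilisation, so that near each $x_{n(j)}$ with $j\ge i$ the contribution to the function part is a ``pure'' power of $b_j$ whose exponent pattern matches that at $x_{n(i)}$, and then invoke the common-order hypothesis to transfer vanishing from $b_i$ to $b_j$. The only differences are bookkeeping: you reduce equality of $m(i)$-balls to triviality of words of length $\le 2m(i)$ (hence your constants $2m(i)$ and $4m(i)$), and you separate $x_{n(i)}$ only from the earlier $x_{n(k)}$, deferring later separations to later inductive steps, whereas the paper imposes the stronger uniform condition $d(x_j,x_k)\ge m(i)$ for all $k\ge n(i)$ at once; both variants work.
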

\begin{proof}
  Choose $n(i)$ such that $d(x_j,x_k)\ge m(i)$ for all $j\neq k$ with
  $k\ge n(i)$, and such that the balls of radius $m(i)$ around
  $x_{n(i)}$ and $x_j$ coincide for all $j>n(i)$.

  Consider then an element $h\in W$ in the ball of radius $m(i)$, and
  write it in the form $h=(c,g)$ with $c\colon X\to B$ and $g\in
  G$. The function $c$ is a product of conjugates of $f$ by words of
  length $<m(i)$. Its support is therefore contained in the union of
  balls of radius $m(i)-1$ around the $x_j$, with $j$ either $\ge
  n(i)$ or of the form $n(k)$ for $k<i$. In particular, the entries of
  $c$ are in $\langle b_1,\dots,b_{i-1}\rangle\cup\bigcup_{j\ge
    i}\langle b_j\rangle$. For $j>n(i)$, the restriction of $c$ to the
  ball around $x_j$ is determined by the restriction of $c$ to the
  ball around $x_{n(i)}$, via the identification $b_i\mapsto b_j$,
  because the neighbourhoods in $X$ coincide and all cyclic groups
  $\langle b_j\rangle$ are isomorphic.

  It follows that the element $h\in W$ is uniquely determined by the
  corresponding element in $W_i$.
\end{proof}

  
\begin{corollary}\label{cor:subexp}
  Let $G$ be a group acting on $X$ with the subexponential wreathing
  property. Let the sequence $(x_i)$ be spreading and locally
  stabilizing.

  If $B$ has locally subexponential growth, then there exists a
  sequence $(n(i))$ such that the group $W$ has subexponential growth.
%
\end{corollary}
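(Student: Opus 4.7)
The plan is to apply Proposition~\ref{prop:Wconvergence} to a sequence $(m(i))$ chosen by a diagonal argument, using the subexponential wreathing property of $G$ acting on $X$ to control the approximating groups $W_i$.

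First, each $W_i$ has subexponential growth. Setting $H_i:=\langle b_1,\dots,b_i\rangle\le B$, the generator $f_i\colon X\to B$ is finitely supported with values in $H_i$, so $W_i=\langle G,f_i\rangle$ is a finitely generated subgroup of the restricted wreath product $H_i\wr_X G$. Since $B$ has locally subexponential growth, $H_i$ is finitely generated of subexponential growth; the subexponential wreathing property then gives that $H_i\wr_X G$ has subexponential growth, hence so does its subgroup $W_i$.

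Next, I construct $(m(i))$ and $(n(i))$ inductively, enforcing, in addition to Proposition~\ref{prop:Wconvergence}'s hypotheses, that
\[
m(i-1)<m(i)\le 2\,m(i-1)\qquad\text{and}\qquad v_{W_i}(m(i))\le e^{m(i)/i}.
\]
The second condition is achievable in principle since $v_{W_i}$ is subexponential; the subtlety is that the relevant threshold depends on $W_i$, which depends on $n(i)$, which in turn is chosen via Proposition~\ref{prop:Wconvergence} based on $m(i)$. This circular dependency is resolved by exploiting the freedom to enlarge $n(i)$: making $n(i)$ sufficiently large places $x_{n(i)}$ far from the support of $f_{i-1}$ and forces the growth of $W_i$ at scales $\le 2\,m(i-1)$ to essentially reduce to that of $W_{i-1}$, which is already subexponential and whose threshold for a bound of the form $e^{R/i}$ can be arranged to lie below $2\,m(i-1)$ by having chosen $m(i-1)$ large enough at the previous step.

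Given such sequences, for $R\ge m(1)$ let $i(R)$ be the smallest index with $m(i)\ge R$. Then by Proposition~\ref{prop:Wconvergence},
\[
v_W(R) = v_{W_{i(R)}}(R) \le v_{W_{i(R)}}(m(i(R))) \le e^{m(i(R))/i(R)} \le e^{2R/i(R)},
\]
where the last step uses $R>m(i(R)-1)\ge m(i(R))/2$. As $R\to\infty$ we have $i(R)\to\infty$, so $v_W(R)=e^{o(R)}$, proving that $W$ has subexponential growth. The main obstacle is making rigorous the claim that $W_i$'s growth at scales just above $m(i-1)$ can be controlled by a sufficiently large choice of $n(i)$; this should follow from the sparse placement of $x_{n(i)}$, which prevents words of length $\le 2\,m(i-1)$ in $W_i$'s generators from meaningfully interacting with the new entry $b_i$ at $x_{n(i)}$, so the extra generator does not inflate the ball at this scale compared to~$W_{i-1}$.
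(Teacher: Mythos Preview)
Your overall strategy---bound each $W_i$ via the subexponential wreathing property, then transfer the bound to $W$ through Proposition~\ref{prop:Wconvergence}---is the same as the paper's. However, there are two genuine problems.

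First, you invoke Proposition~\ref{prop:Wconvergence} without verifying its hypothesis that all $b_i$ have the same order. The paper handles this at the outset by replacing $B$ with $B\times Z$ for a suitable cyclic group $Z=\langle z\rangle$ and each $b_i$ with $b_i z$; this preserves locally subexponential growth and forces all generators to share the order of $z$. Without this, neither the identification of balls in $W$ and $W_i$ nor your comparison of $W_i$ with $W_{i-1}$ is available.

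Second, your self-imposed constraint $m(i)\le 2\,m(i-1)$ forces you into the circularity you describe, and your proposed resolution is not quite right. You say that placing $x_{n(i)}$ far away ``prevents words \dots\ from meaningfully interacting with the new entry $b_i$'', so the ball does not inflate. But the new value \emph{does} add a degree of freedom near $x_{n(i)}$; what actually makes the balls of $W_i$ and $W_{i-1}$ coincide (at the relevant radius) is that this new contribution is \emph{determined} by the contribution near $x_{n(i-1)}$, via the local stabilization of neighbourhoods together with the isomorphism $\langle b_{i-1}\rangle\cong\langle b_i\rangle$---precisely the same-order mechanism from the proof of Proposition~\ref{prop:Wconvergence}. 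So this step again hinges on the missing same-order reduction, and even then your inductive bookkeeping (ensuring the threshold for $e^{R/i}$ lands in $[m(i-1),2m(i-1)]$) is left unverified.

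The paper avoids all of this by dropping the constraint $m(i)\le 2m(i-1)$ entirely. Once $n(1),\dots,n(i)$ are fixed, $W_i$ is known; choose $m(i)$ large enough that $v_{W_i}(m(i))\le\epsilon_i^{m(i)}$ for a sequence $\epsilon_i\searrow1$, and only then choose $n(i+1)$ large. The gap between $m(i)$ and $m(i+1)$ is filled not by squeezing the $m(i)$ together but by submultiplicativity of the growth function: $w(R)\le w(m(i))^{\lceil R/m(i)\rceil}\le\epsilon_i^{R+m(i)}$ for all $R\ge m(i)$, whence $\limsup_R w(R)^{1/R}\le\epsilon_i$ for every $i$. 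This interleaving $n(1),m(1),n(2),m(2),\dots$ removes the circular dependency cleanly.
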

\begin{proof}
  Let $Z=\langle z\rangle$ be a cyclic group whose order (possibly
  $\infty$) is divisible by the order of the $b_i$'s.  We replace $B$
  by $B\times Z$ and each $b_i$ by $b_i z$, so as to guarantee that
  all generators in $B$ have the same order.

  Let $\epsilon_i$ be a decreasing sequence tending to $1$. We
  construct a sequence $m(i)$ inductively, and obtain the sequence
  $n(i)$ by Proposition~\ref{prop:Wconvergence}, making always sure
  that $m(i)$ depends only on $m(j),n(j)$ for $j<i$.

  Denote by $v_i$ the growth function of the group $W_i$ introduced in
  Proposition~\ref{prop:Wconvergence}. Since the group $W_i$ is
  contained in $B\wr_X G$ and $G$ has the subexponential wreathing
  property, it has subexponential growth. Therefore, there exists
  $m(i)$ be such that
  \[v_i(m(i))\le \epsilon_i^{m(i)}.
  \]
  By Proposition~\ref{prop:Wconvergence}, the terms
  $n(i+1),n(i+2),\dots$ can be chosen in such a manner that the balls
  of radius $m(i)$ coincide in $W$ and $W_i$.

  Denote now by $w$ the growth function of $W$. We then have
  $w(m(i))\le\epsilon_i^{m(i)}$. Therefore,
  \[w(R)\le\epsilon_i^{R+m(i)}\text{ for all }R>m(i),\] so
  $\lim\sqrt[R]{w(R)}\le\epsilon_i$. Since this holds for all $i$, the
  growth of $W$ is subexponential.
%
%
%
%
\end{proof}

\begin{proof}[Proof of Theorem~\ref{thm:imbed}]
  By Proposition~\ref{prop:imbed'}, the countable, locally
  subexponentially growing group $B$ imbeds in $[H,H]$ for a
  countably, locally subexponentially growing group $H$. By
  Lemma~\ref{lem:contains [B,B]}, $[H,H]$ imbeds in $[W,W]$, and by
  Corollary~\ref{cor:subexp}, the finitely generated group $W$ has
  subexponential growth.
\end{proof}

\begin{remark}
  If the sequence $(x_i)$ is only spreading, or only stabilizing, then
  it may happen that $W$ have exponential growth, even if the sequence
  $(n(i))$ grows arbitrarily fast.
\end{remark}
\begin{proof}
  We first consider an example where the sequence $(x_i)$ is spreading
  but not stabilizing. Consider $G=\grig$ acting on $X=\mathbf1^\infty
  G$, and let $P$ denote the stabilizer of $\mathbf1^\infty$ so that
  $X=P\backslash G$. Since the action is faithful, we have
  $\bigcap_{g\in G}P^g=1$, and in fact $\bigcap_{g\in T}P^g=1$ for a
  sequence $T$ in $G$ such that $(\mathbf1^\infty t\colon t\in T)$ is
  spreading. Take $B=\langle z\rangle\cong\Z$ and define $f\colon X\to
  B$ by $f(\mathbf1^\infty t)=z$ for all $t\in T$, all other values
  being $1$. Then $\langle G,f\rangle\cong\Z\wr G$ has exponential
  growth.

  We next consider an example where the sequence $(x_i)$ is
  stabilizing but not spreading. Again, consider $G=\grig$ acting on
  $X$, and consider a spreading, stabilizing sequence $(x_{2i})$ in
  $X$. Set $x_{2i-1}=x_i a$. Consider $B=\grig$, and note that, since
  $G$ does not satisfy any law, there are sequences
  $(g_1,h_1),(g_2,h_2),\dots$ of pairs of elements of $G$ such that
  the groups $\langle g_i,h_i\rangle$ converge to a free group of rank
  $2$ in the Cayley topology. Set then $f(x_{2i})=h_i$ and
  $f(x_{2i-1})=g_i$, and note that $\langle G,f\rangle$ contains a
  free group.
\end{proof}

\begin{remark}\label{rem:2gen}
  If $B$ has locally subexponential growth, then it may be imbedded in
  a $2$-generator group of subexponential growth.
\end{remark}
\begin{proof}
  We make the following general claim about finitely generated groups:
  if $W$ is finitely generated, then there exists a $2$-generated
  hyper-$W$ group in which $[W,W]$ imbeds. Since the imbedding given
  by Theorem~\ref{thm:imbed} is actually into $[W,W]$, this is
  sufficient to prove the remark.

  Let us now turn to the claim, and consider a group $W$ generated by
  a set $S=\{s_1,\dots,s_n\}$. Let $C=\langle t|t^{2^n}\rangle$ be a
  cyclic group, and consider the subgroup
  \[\overline W=\langle x,t\rangle\le W\wr C,\]
  with $x\colon C\to W$ defined by $x(t^{2^{i-1}})=s_i$ for all
  $i\in\{1,\dots,n\}$, all other values being trivial. The imbedding
  of $[W,W]$ into $\overline W$ is as functions $x\colon C\to W$ whose
  support is contained in $\{t\}$. Indeed, given $w\in[W,W]$, write it
  as a balanced word (i.e.\ with exponent sum zero in each variable)
  $w$ over $S$, and replace each $s_i$ by
  $\overline{s_i}:=x^{t^{1-2^{i-1}}}$, yielding $\overline
  w\in[\overline W,\overline W]$. The functions $\overline{s_i}\colon
  C\to W$ all have disjoint supports, except at $t$ where their
  respective value is $s_i$. Therefore, $\overline w$ is supported
  only at $t$ and has value $w$ there.
\end{proof}
%
%
%


\begin{bibdiv}
\begin{biblist}
\font\cyreight=wncyr8
\bibselect{math}
\end{biblist}
\end{bibdiv}

\end{document}